\newtheorem{theorem}{Theorem}[section]
\newtheorem{lemma}[theorem]{Lemma}
\theoremstyle{definition}
\newtheorem{example}[theorem]{Example}
\newtheorem{remark}[theorem]{Remark}
\newtheorem{proposition}[theorem]{Proposition}
\newtheorem{corollary}[theorem]{Corollary}
\newcommand*{\Ker}{\operatorname{Ker}}
\newcommand*{\Image}{\operatorname{Im}}
\newtheorem*{acknowledgement}{Acknowledgement}
\begin{document}
\title{Symmetric continuous cohomology of topological groups}
\author{Mahender Singh}
\email{mahender@iisermohali.ac.in}
\address{Indian Institute of Science Education and Research (IISER) Mohali, Phase 9, Sector 81, Knowledge City, S A S Nagar (Mohali), Post Office Manauli, Punjab 140306, India}
\keywords{Continuous cohomology, group extension, Lie group, profinite group, symmetric cohomology, topological group.}
\subjclass[2010]{Primary 20J06; Secondary 54H11,57T10.}

\begin{abstract}
In this paper, we introduce a symmetric continuous cohomology of topological groups. This is obtained by topologizing a recent construction due to Staic \cite{Staic1}, where a symmetric cohomology of abstract groups is constructed. We give a characterization of topological group extensions that correspond to elements of the second symmetric continuous cohomology. We also show that the symmetric continuous cohomology of a profinite group with coefficients in a discrete module is equal to the direct limit of the symmetric cohomology of finite groups. In the end, we also define symmetric smooth cohomology of Lie groups and prove similar results.
\end{abstract}

\maketitle

\section{Introduction}\label{section1}
The cohomology of abstract groups came into being with the fundamental work of Eilenberg and MacLane \cite{Eilenberg1, Eilenberg2}. The theory developed rapidly with the works of Eilenberg, MacLane, Hopf, Eckmann, Segal, Serre and many other authors. The cohomology of groups has been a popular research subject and has been studied from different perspectives with applications in algebraic number theory, algebraic topology and Lie algebras, to name a few. A detailed account of the history of the subject appears in \cite{Weibel}.

When the group under consideration is equipped with a topology, then it is natural to look for a cohomology theory which also takes the topology into account. This lead to many new cohomology theories of topological groups and the topology was first inserted in the formal definition of group cohomology in the works of Heller \cite{Heller}, Hu \cite{Hu} and van Est \cite{vanEst}.

In \cite{Fiedorowicz}, Fiedorowicz and Loday defined a homology theory of crossed simplicial groups. Motivated by their construction, Staic \cite{Staic1} introduced the notion of the $\Delta$-group $\Gamma(X)$ for a topological space $X$. Given a group $G$ and a $G$-module $A$, for each $n \geq 0$, Staic defined an action of the symmetric group $\Sigma_{n+1}$ on the standard $n$th cochain group $C^n(G, A)$ used to compute the usual group cohomology and proved it to be compatible with the standard coboundary operators $\partial^n$. Thus, the subcomplex $\{C^n(G, A)^{\Sigma_{n+1}},\partial^n \}_{n \geq 0}$ of invariant elements of $\{C^n(G, A),\partial^n \}_{n \geq 0}$ gives a new cohomology theory $HS^*(G, A)$ called the symmetric cohomology. Staic showed that the $\Delta$-group $\Gamma(X)$ is determined by the action of $\pi_1(X)$ on $\pi_2(X)$ and an element of $HS^3(\pi_1(X), \pi_2(X))$.

The inclusion of the cochain groups $C^n(G, A)^{\Sigma_{n+1}} \hookrightarrow C^n(G, A)$ induces a homomorphism from $HS^n(G,A) \to H^n(G,A)$. In \cite{Staic1}, it is shown that $HS^2(G,A) \to H^2(G,A)$ is injective.

It is well known that, if $A$ is a $G$-module, then there is a bijection between $H^2(G,A)$ and the set of equivalence classes of group extensions of $G$ by $A$ with the given $G$-module structure. Therefore, it is natural to ask what kind of group extensions correspond to elements of the second symmetric cohomology. In \cite{Staic2}, Staic proved that $HS^2(G,A)$ is in bijection with the set of equivalence classes of group extensions $0 \to A \to E \to G \to 1$ admitting a section $s: G \to E$ with the property that $s(g^{-1})=s(g)^{-1}$ for all $g \in G$. Note that the condition is slightly weaker than $s$ being a homomorphism. We shall see that there are examples of non-split extensions of groups admitting such a section.

The purpose of this paper is to topologize this construction and introduce a symmetric continuous cohomology of topological groups. As for the discrete case, we give a characterization of topological group extensions that correspond to elements of the second symmetric continuous cohomology. We also show that the symmetric continuous cohomology of a profinite group with coefficients in a discrete module is equal to the direct limit of the symmetric cohomology of finite groups. We similarly define symmetric smooth cohomology of Lie groups.

The paper is organized as follows. In Section \ref{section2}, we fix some notation and recall some known definitions and results that will be used in the paper. In Section \ref{section3}, we introduce the symmetric continuous cohomology of topological groups. In Section \ref{section4}, we give some examples to illustrate the proposed cohomology theory. In Section \ref{section5}, we prove some properties of the symmetric continuous cohomology of topological groups. In Section \ref{section6}, we discuss the symmetric continuous cohomology of profinite groups. Finally, in Section \ref{section7}, we define the symmetric smooth cohomology of Lie groups and prove some of its properties.

\begin{acknowledgement}
The author would like to thank the referee for comments which improved the presentation of the paper. The author is grateful to the MathOverflow community which was helpful in clarifying some examples. The author would also like to thank the Department of Science and Technology of India for support via the INSPIRE Scheme IFA-11MA-01/2011 and the SERC Fast Track Scheme SR/FTP/MS-027/2010.
\end{acknowledgement}
\bigskip

\section{Notation and terminology}\label{section2}
In this section, we fix some notation and recall some known definitions and results. We refer the reader to Brown \cite{Brown} for basic material on the cohomology of groups. For any extension $0 \to A \to E \to G \to 1$ of groups (abstract, topological or Lie), the group $A$ is written additively and the groups $E$ and $G$ are written multiplicatively, unless otherwise stated or it is clear from the context.

\subsection*{Cohomology of abstract groups} 
Let us recall the construction of the cochain complex defining the cohomology of abstract groups (groups without any other structure). Let $G$ be a group and $A$ be a $G$-module. More precisely, there is a group action $$G \times A \to A$$ by automorphisms. As usual $A$ is written additively and $G$ is written multiplicatively, unless otherwise stated or it is clear from the context. For each $n \geq 0$, the group of $n$-cochains $C^n(G, A)$ is the group of all maps $\sigma: G^n \to A$. The coboundary $$\partial^n:C^n(G, A) \to C^{n+1}(G, A)$$ is given by
\begin{equation}\label{eqn1}
\begin{split}
\partial^n(\sigma)(g_1,...,g_{n+1})& = g_1\sigma(g_2,...,g_{n+1}) \\
& +  \sum_{i=1}^n (-1)^{i+1}\sigma(g_1,...,g_ig_{i+1},...,g_{n+1}) \\
& + \sigma(g_1,...,g_n),
\end{split}
\end{equation}
for all $\sigma \in C^n(G, A)$ and $(g_1,...,g_{n+1}) \in G^{n+1}$. It is straightforward to verify that $\partial^{n+1}\partial^n=0$ and hence we obtain a cochain complex. Let $Z^n(G, A)= \Ker (\partial^n)$ be the group of $n$-cocycles and $B^n(G, A)= \Image (\partial^{n-1})$ be the group of $n$-coboundaries. Then the $n$th cohomology group is given by $$H^n(G,A)= Z^n(G, A)/B^n(G, A).$$ If $\sigma \in Z^n(G, A)$ is a $n$-cocyle, we denote by $[\sigma] \in H^n(G, A)$ the corresponding cohomology class.

\subsection*{Symmetric cohomology of abstract groups}
For each $n \geq 0$, let $\Sigma_{n+1}$ be the symmetric group on $n+1$ symbols. In \cite{Staic1}, Staic defined an action of the symmetric group $\Sigma_{n+1}$ on the $n$th cochain group $C^n(G, A)$. Since the transpositions of adjacent elements form a generating set for $\Sigma_{n+1}$, it is enough to define the action of these transpositions $\tau_i = (i, i+1)$ for $1 \leq i \leq n$. For $\sigma \in C^n(G,A)$ and $(g_1,...,g_n) \in G^n$, define
\begin{equation}\label{eqn2}
\begin{split}
(\tau_1 \sigma)(g_1,g_2,g_3,...,g_n) &= -g_1 \sigma\big((g_1)^{-1},g_1g_2,g_3,...,g_n\big),\\
(\tau_i \sigma)(g_1,g_2,g_3,...,g_n) &= -\sigma\big(g_1,...,g_{i-2},g_{i-1}g_i, (g_i)^{-1}, g_ig_{i+1}, g_{i+2},...,g_n\big)~  \textrm{for}~ 1 < i < n,\\
(\tau_n \sigma)(g_1,g_2,g_3,...,g_n) &= -\sigma\big(g_1,g_2,g_3,...,g_{n-1}g_n, (g_n)^{-1}\big).
\end{split}
\end{equation}

It is shown in \cite{Staic1} that the above action is compatible with the coboundary operators $\partial^n$ and hence yields the subcomplex of invariants $$\{CS^n(G, A), \partial^n\}_{n \geq 0}= \{C^n(G, A)^{\Sigma_{n+1}}, \partial^n\}_{n \geq 0}.$$ The cohomology of this cochain complex is called the symmetric cohomology of $G$ with coefficients in $A$ and is denoted $HS^n(G,A)$. The cocycles and the coboundaries are called the symmetric cocycles and the symmetric coboundaries, respectively.

\subsection*{Continuous cohomology of topological groups}
We assume that all topological groups under consideration satisfy the $T_0$ separation axiom. Let $G$ be a topological group and $A$ be an abelian topological group. We say that $A$ is a topological $G$-module if there is a continuous action of $G$ on $A$. The continuous cohomology of topological groups was defined independently by Hu \cite{Hu}, van Est \cite{vanEst} and Heller \cite{Heller} as follows.

For each $n \geq 0$, let $C_{c}^n(G,A)$ be the group of all continuous maps from $G^n \to A$, where $G^n$ is the product topological group. The coboundary maps given by the standard formula as in \eqref{eqn1}, gives the cochain complex $\{C_{c}^n(G,A), \partial^n \}_{n \geq 0}$. The continuous cohomology of $G$ with coefficients in $A$ is defined to be the cohomology of this cochain complex and is denoted by $H_{c}^*(G,A)$.

Clearly, this cohomology theory coincides with the abstract cohomology theory when the groups under consideration are discrete (in particular finite). The low dimensional cohomology groups are as expected. More precisely, $H_{c}^0(G,A)=A^G$ and $Z_{c}^1(G,A)=$ the group of continuous crossed homomorphisms from $G$ to $A$.

An extension of topological groups $$0 \to A \stackrel{i}{\to} E \stackrel{\pi}{\to} G \to 1$$ is an algebraically exact sequence of topological groups with the additional property that $i$ is closed continuous and $\pi$ is open continuous. Note that, if we assume that $i$ and $\pi$ are only continuous, then $A$ viewed as a subgroup of $E$ may not have the relative topology and the isomorphism $E/i(A) \cong G$ may not be a homeomorphism.

A section to the given extension is a map $s:G \to E$ such that $\pi s(g)=g$ for all $g \in G$. Since $A$ is closed and $i$ is closed continuous, it follows that $i(A)= \pi^{-1}(\{1\})$ is a closed subgroup of $E$ and $i:A \to E$ is an embedding of $A$ onto a closed subgroup of $E$.

Let $G$ be a topological group and $A$ a topological $G$-module. Let $0 \to A \stackrel{i}{\to} E \stackrel{\pi}{\to} G \to 1$ be a topological group extension and let $s:G \to E$ be a section to $\pi$. Since $A$ is abelian, for $a \in A$ and $g \in G$, one can see that the element $i^{-1}\big( s(g)i(a)s(g)^{-1} \big)$ does not depend on the choice of the section. The extension $0 \to A \stackrel{i}{\to} E \stackrel{\pi}{\to} G \to 1$ is said to correspond to the given way in which $G$ acts on $A$ if $$ga=i^{-1}\big( s(g)i(a)s(g)^{-1} \big)~\textrm{for all}~a \in A~\textrm{and}~g \in G.$$

Consider the set of all topological group extensions of $G$ by $A$ corresponding to the given way in which $G$ acts on $A$. Two such extensions $0 \to A \stackrel{i}{\to} E \stackrel{\pi}{\to} G \to 1$ and $0 \to A \stackrel{i'}{\to} E' \stackrel{\pi'}{\to} G \to 1 $ are said to be equivalent if there exists an open continuous isomorphism $\phi:E \to E'$ such that the following diagram commute
$$
\xymatrix{
0 \ar[r] & A \ar@{=}[d] \ar[r]^{i} & E \ar[r]^{\pi} \ar[d]^{\phi} & G \ar[r] \ar@{=}[d] & 1 \\
0 \ar[r] & A \ar[r]^{i'} & E' \ar[r]^{\pi'} & G \ar[r] & 1.}
$$

For brevity, $E\cong E'$ denotes the equivalence of extensions. Heller \cite{Heller} and Hu \cite[Theorem 5.3]{Hu} independently proved the following result.

\begin{theorem}\label{hellerhu}
Let $G$ be a topological group and $A$ a topological $G$-module. Then $H_{c}^2(G,A)$ is in bijection with the set of equivalence classes of topological group extensions of $G$ by $A$ admitting a (global) continuous section and the given $G$-module structure.
\end{theorem}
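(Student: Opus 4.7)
The plan is to follow the classical proof of the abstract case (sending an extension to a $2$-cocycle built from a section, and a cocycle to a semidirect-product-type extension), while verifying at each step that the topological hypotheses are preserved. Throughout, I use that the map $i\colon A\to i(A)\subset E$ is a closed embedding, hence a homeomorphism onto its image, so the algebraic inverse $i^{-1}$ is continuous on $i(A)$.

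In one direction, start with a topological extension $0\to A\xrightarrow{i} E\xrightarrow{\pi}G\to 1$ together with a continuous section $s\colon G\to E$; after replacing $s$ with $g\mapsto s(g)s(1)^{-1}$ we may assume $s(1)=1$. Define $f\colon G\times G\to A$ by
\[
i\bigl(f(g_1,g_2)\bigr)\;=\;s(g_1)\,s(g_2)\,s(g_1g_2)^{-1}.
\]
The right hand side lies in $\ker\pi=i(A)$ by construction, so $f$ is well defined; it is continuous because multiplication, inversion, and $s$ are continuous and $i^{-1}\colon i(A)\to A$ is continuous. The associativity identity $\bigl(s(g_1)s(g_2)\bigr)s(g_3)=s(g_1)\bigl(s(g_2)s(g_3)\bigr)$, together with the conjugation formula describing the $G$-action on $A$, yields the standard $2$-cocycle identity, so $f\in Z_c^2(G,A)$. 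A second continuous normalised section $s'$ gives $s(g)=i(h(g))\,s'(g)$ for a continuous $h\colon G\to A$ (again using that $i$ is a closed embedding), and a direct computation shows the associated cocycles differ by $\partial^1 h$. Equivalent extensions are handled similarly: the isomorphism $\phi$ pushes forward a continuous section of $E$ to one of $E'$, producing the same cocycle. This defines a well defined map $\Phi$ from equivalence classes of extensions with continuous section to $H_c^2(G,A)$.

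In the reverse direction, given a continuous normalised $2$-cocycle $f$, construct $E_f$ as the set $A\times G$ with the product topology and the twisted multiplication
\[
(a_1,g_1)(a_2,g_2)\;=\;\bigl(a_1+g_1\!\cdot\! a_2+f(g_1,g_2),\,g_1g_2\bigr).
\]
Continuity of multiplication follows from continuity of addition and the $G$-action on $A$, of multiplication in $G$, and of $f$; continuity of inversion follows from the explicit formula $(a,g)^{-1}=\bigl(-g^{-1}\!\cdot\!a-g^{-1}\!\cdot\!f(g,g^{-1}),\,g^{-1}\bigr)$. Let $i(a)=(a,1)$ and $\pi(a,g)=g$; then $i(A)=A\times\{1\}$ is closed in $A\times G$ (since $G$ is $T_0$, hence Hausdorff, as a topological group) and $\pi$ is a projection from a product, hence open. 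Thus $0\to A\to E_f\to G\to 1$ is a topological extension, it admits the continuous section $g\mapsto(0,g)$, and the induced conjugation action on $A$ recovers the given one. A continuous $1$-cochain $h$ with $f'=f+\partial^1 h$ produces the open continuous isomorphism $\phi\colon E_f\to E_{f'}$, $\phi(a,g)=(a+h(g),g)$, showing the construction descends to a map $\Psi$ from $H_c^2(G,A)$ to equivalence classes.

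The two constructions are mutually inverse by essentially formal checks: starting from $f$, the canonical section $g\mapsto(0,g)$ of $E_f$ reproduces $f$; starting from an extension $E$ with continuous section $s$, the map $(a,g)\mapsto i(a)s(g)$ is an open continuous isomorphism $E_f\to E$ making the extension diagram commute. The main obstacle, and the only place where the argument departs nontrivially from the discrete case, is the topological bookkeeping: ensuring that each algebraically defined map (the cocycle $f$, the transition $h$ between sections, the equivalence $\phi$) is continuous, and that the set-theoretic extension $E_f$ really is a topological extension with closed continuous $i$ and open continuous $\pi$. Both issues reduce to the repeated use of the fact that $i\colon A\to i(A)$ is a homeomorphism onto its image, together with the universal property of the product topology on $A\times G$.
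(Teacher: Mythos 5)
Your proposal is correct: it is the classical Eilenberg--MacLane correspondence with the topological bookkeeping (continuity of $i^{-1}$ on $i(A)$, product topology on $E_f$, openness of the projection) carried out properly, and this is exactly the strategy the paper itself employs in its proof of the analogous symmetric statement (Theorem \ref{theorem3.3}). Note that for Theorem \ref{hellerhu} itself the paper gives no proof but simply cites Heller and Hu, so your argument matches the intended one in both substance and route.
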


We shall prove similar theorems using symmetric continuous cohomology of topological groups (Theorem \ref{theorem3.3}) and symmetric smooth cohomology of Lie groups (Theorem \ref{theorem7.2}) in the following sections.

An extension of topological groups is said to be topologically split if $E$ is $A \times G$ as a topological space. Note that if an extension of topological groups admits a continuous section, then the extension is topologically split. Extensions of topological groups admitting a continuous section are assured by the following theorem.

\begin{theorem}\cite[Theorem 2]{Shtern}
Let $G$ be a connected locally compact group. Then any topological group extension of $G$ by a simply connected Lie group admits a continuous
section.
\end{theorem}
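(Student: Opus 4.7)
The plan is first to treat the case when $G$ is a Lie group using bundle-theoretic methods and the simple connectivity of the fibre, then to reduce the general case to this one via the structure theory of connected locally compact groups.

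For the Lie case, assume $G$ is a connected Lie group. Then $\pi\colon E\to G$ is a principal $A$-bundle in the category of Lie groups, and the implicit function theorem together with the exponential maps of $E$ and $G$ provides a continuous local section on a neighbourhood of $1\in G$. Since a connected Lie group is homeomorphic to $K\times\mathbb R^n$ for a maximal compact subgroup $K$, the existence of a global continuous section of $\pi$ over $G$ is equivalent, by contractibility of $\mathbb R^n$, to the existence of such a section over $K$. Over the compact Lie group $K$ the obstructions to constructing a section of a principal $A$-bundle lie in $H^{k+1}(K;\pi_k(A))$, and the hypotheses $\pi_0(A)=\pi_1(A)=0$ kill the obstructions in degrees $1$ and $2$ at once.

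For the general case, $G$ is a connected locally compact group, and by the Gleason--Yamabe structure theorem $G=\varprojlim_\alpha G/N_\alpha$ where each $N_\alpha$ is a compact normal subgroup and each $G/N_\alpha$ is a connected Lie group. Since $E$ is itself connected and locally compact (connectedness is preserved under extensions whose kernel and quotient are connected), the same theorem applies to $E$, giving a cofinal system of compact normal subgroups $M_\beta\subseteq E$ with $E/M_\beta$ a Lie group. Choosing the $M_\beta$ compatibly with the $N_\alpha$ produces a compatible family of Lie-group extensions into which the original extension maps; continuous sections at each Lie-level, obtained from the first step, should then assemble through the projective limit to a continuous section of the original $\pi$.

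The main obstacle is the vanishing of the higher obstructions in the Lie case: a simply connected Lie group $A$ can have nontrivial $\pi_k$ for $k\geq 3$ (for instance $\pi_3(SU(2))=\mathbb Z$), so simple connectivity alone does not kill every obstruction class. The decisive additional input must be that our bundle arises from a topological group extension rather than from an arbitrary principal bundle: the group multiplication on $E$ provides a canonical way to propagate the local section near the identity to a global section by left translation on $G$, and it is the coherence of this propagation, not a direct obstruction-theoretic vanishing, that resolves the higher obstructions. I expect that establishing this propagation is well-defined and continuous on all of $G$ is where the bulk of the technical work lies, and is the crux of Shtern's argument.
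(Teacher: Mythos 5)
The paper offers no proof of this statement --- it is quoted verbatim from Shtern \cite[Theorem 2]{Shtern} --- so there is no in-paper argument to compare against; I can only assess your sketch on its own terms. Its architecture (settle the Lie case by bundle theory over a maximal compact subgroup, then pass to general connected locally compact $G$ via the Gleason--Yamabe pro-Lie structure) is the right shape, but both halves contain genuine gaps, and you have correctly located the first one yourself without resolving it.

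The Lie-case gap is the serious one. As you note, the obstructions to a section of a principal $A$-bundle over $K$ live in $H^{k+1}(K;\pi_k(A))$ for all $k$, and simple connectivity of $A$ only kills $k=0,1$. Your proposed repair --- propagating the local section near $1$ around $G$ by left translation using the multiplication of $E$ --- does not work. If you mean extending $s$ multiplicatively via $s(gh)=s(g)s(h)$, well-definedness would force $s$ to be a homomorphism, i.e.\ the extension would split as a semidirect product; that is strictly stronger than the theorem and false, as the paper's own Example \ref{example4.2} shows: $0\to\mathbb{R}\to\mathcal{H}_3(\mathbb{R})\to\mathbb{R}^2\to 0$ has simply connected kernel and a continuous (even symmetric) section but is non-split. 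If instead you mean covering $G$ by translates $gU$ and translating the local section, you only recover local triviality of the bundle, which is the starting point, not the conclusion; the patching over intersections is exactly the obstruction problem you are trying to avoid. Two further remarks: (i) in the setting this paper actually uses the theorem, $A$ is abelian, and a simply connected abelian Lie group is $\mathbb{R}^n$, which is contractible --- there all obstruction groups vanish and your argument closes with no repair needed; (ii) for the general non-abelian statement, the standard route is not obstruction theory on $\pi_k(A)$ but compatible Iwasawa--Mal'cev decompositions $A\approx K_A\times\mathbb{R}^p$, $E\approx K_E\times\mathbb{R}^m$, $G\approx K_G\times\mathbb{R}^n$ reducing the problem to an extension of compact groups $1\to K_A\to K_E\to K_G\to 1$ with $K_A$ compact and simply connected (hence semisimple), for which a continuous cross-section exists; you need some such input in place of the translation heuristic. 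Finally, in the projective-limit step, choosing sections of the quotient extensions $E/M_\beta\to G/N_\alpha$ level by level does not suffice: to pass to the limit the sections must be chosen coherently with the bonding maps, which requires a lifting lemma (given a section downstairs, produce one upstairs mapping to it), and that lemma is itself essentially the Lie case again relative to the compact kernels $M_\beta/M_{\beta'}$. As written, the assembly "should then assemble through the projective limit" is an assertion, not an argument.
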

\bigskip

\section{Symmetric continuous cohomology of topological groups}\label{section3}
In this section, we define the symmetric continuous cohomology of topological groups, having the expected cohomology groups in low dimension. From now on, $G$ is a topological group and $A$ a topological $G$-module. Let $n \geq 0$. Since $G$ is a topological group, for $(g_1,...,g_n) \in G^n$,
\begin{equation}\label{eqn3}
\begin{split}
(g_1,g_2,g_3,...,g_n) &\mapsto \big((g_1)^{-1},g_1g_2,g_3,...,g_n\big),\\
(g_1,g_2,g_3,...,g_n) &\mapsto \big(g_1,...,g_{i-2},g_{i-1}g_i, (g_i)^{-1}, g_ig_{i+1}, g_{i+2},...,g_n\big)~  \textrm{for}~ 1 < i < n,\\
(g_1,g_2,g_3,...,g_n) &\mapsto \big(g_1,g_2,g_3,...,g_{n-1}g_n, (g_n)^{-1}\big),
\end{split}
\end{equation}
are all continuous maps $G^n \to G^n$.

The continuity of the action $G \times A \to A$ and the maps given by \eqref{eqn3} shows that $\tau \sigma \in  C_{c}^n(G,A)$ for each $\tau \in \Sigma_{n+1}$ and $\sigma \in C_{c}^n(G,A)$. By \cite[Proposition 5.1]{Staic1}, the formulas given by \eqref{eqn2} define an action compatible with the coboundary operators $\partial^n$ given by \eqref{eqn1}.

This gives the subcomplex of invariants $$\{CS_{c}^n(G, A), \partial^n\}_{n \geq 0} =\{C_{c}^n(G, A)^{\Sigma_{n+1}}, \partial^n\}_{n \geq 0}.$$ We call the cohomology of this cochain complex the symmetric continuous cohomology of $G$ with coefficients in $A$ and denote it by $HS_{c}^n(G,A)$.

Clearly, when the groups under consideration are discrete, then $HS_{c}^n(G,A)=HS^n(G,A)$. When $G$ is a connected topological group and $A$ is a discrete G-module, then $G$ acts trivially on $A$ and the continuous cochains are only constant maps. Then it follows that $HS_{c}^0(G,A)=A$ and $HS_{c}^n(G,A)=0$ for each $n \geq 1$.

Observe that a 1-cocycle $\lambda:G \to  A$ is symmetric if $$\lambda(g)=-g\lambda(g^{-1})~\textrm{for all}~g \in G$$ and a 2-cocycle $\sigma:G\times G \to A$ is symmetric if
\begin{equation}\label{eqn4}
\begin{split}
\sigma(g,k)=-g\sigma(g^{-1}, gk) & = -\sigma(gk, k^{-1})~\textrm{for all}~g,k \in G.
\end{split}
\end{equation}

It is easy to establish the following properties.

\begin{proposition}\label{proposition3.1}
Let $G$ be a topological group and $A$ be a topological $G$-module. Then
\begin{enumerate}
\item $HS_{c}^0(G,A)=A^G= H_{c}^0(G, A)$
\item $ZS_{c}^1(G,A)=$ the group of symmetric continuous crossed homomorphisms from $G$ to $A$.
\end{enumerate}
\end{proposition}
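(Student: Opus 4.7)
The plan is to verify both items by simply unpacking the definitions of $C_c^n(G,A)$, of the $\Sigma_{n+1}$-action, and of the coboundary $\partial^n$ in the degrees $n=0,1$.

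For item (1), I first identify $C_c^0(G,A)$ with $A$ via evaluation on the unique point of $G^0$. The symmetric group $\Sigma_1$ is trivial, so taking invariants is a no-op and $CS_c^0(G,A)=A$. Next I compute $\partial^0$ from \eqref{eqn1}: for $a\in A$ regarded as a constant $0$-cochain, $(\partial^0 a)(g) = g a - a$. Hence $\ker(\partial^0)=\{a\in A : g a = a \text{ for all } g\in G\}=A^G$. Since there are no $(-1)$-coboundaries, $HS_c^0(G,A)=A^G$, and the same reasoning applied to the full complex $C_c^*(G,A)$ yields $H_c^0(G,A)=A^G$, proving the equality.

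For item (2), I unpack the cocycle condition in degree $1$: from \eqref{eqn1}, $\partial^1\lambda(g,h)=g\lambda(h)-\lambda(gh)+\lambda(g)$, so $\lambda\in Z_c^1(G,A)$ iff $\lambda\colon G\to A$ is a continuous map with $\lambda(gh)=g\lambda(h)+\lambda(g)$, i.e. a continuous crossed homomorphism. The symmetric condition comes from the action of $\Sigma_2=\{e,\tau_1\}$: by \eqref{eqn2}, $(\tau_1\lambda)(g)=-g\lambda(g^{-1})$, so $\lambda$ is symmetric iff $\lambda(g)=-g\lambda(g^{-1})$ for all $g\in G$, as noted before the proposition. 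Therefore $ZS_c^1(G,A)$ is exactly the group of continuous crossed homomorphisms $\lambda$ satisfying this relation.

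There is no real obstacle; the only mildly substantive remark (which I would include for completeness but not labor over) is that for any crossed homomorphism $\lambda$ one has $\lambda(1)=0$ by evaluating $\lambda(1\cdot 1)=1\cdot\lambda(1)+\lambda(1)$, and then applying the cocycle identity to $gg^{-1}=1$ gives $0=g\lambda(g^{-1})+\lambda(g)$; so the symmetric condition in degree one is automatic and in fact $ZS_c^1(G,A)=Z_c^1(G,A)$. In any event, the statement as written is purely a matter of rewriting definitions, and the continuity conditions are preserved throughout by the continuity of the structure maps of $G$ and of the action $G\times A\to A$.
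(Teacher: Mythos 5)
Your proof is correct and follows essentially the same definition-unpacking route as the paper, whose own proof simply records that $\lambda \in ZS_{c}^1(G,A)$ if and only if $\lambda$ is continuous and satisfies $\lambda(gk)=g\lambda(k)+\lambda(g)$ and $\lambda(g)=-g\lambda(g^{-1})$. Your closing observation that the symmetry condition is automatic for $1$-cocycles (via $\lambda(1)=0$ and $\lambda(gg^{-1})=g\lambda(g^{-1})+\lambda(g)$), so that in fact $ZS_{c}^1(G,A)=Z_{c}^1(G,A)$, is a correct small strengthening that the paper does not state.
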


\begin{proof} (1) is straightforward. By definition $$ZS_{c}^1(G,A)= \Ker \{\partial^1: CS_{c}^1(G,A) \to CS_{c}^2(G,A) \}.$$ Therefore, $\lambda \in ZS_{c}^1(G,A)$ if and only if $\lambda$ is continuous and satisfy $\lambda(g)=-g\lambda(g^{-1})$ and $\lambda (gk)=g\lambda(k) + \lambda (g)$ for all $g, k \in G$. In other words, $\lambda$ is a symmetric continuous crossed homomorphism. This proves (2).
\end{proof}

As in the discrete case, the inclusion of the subcomplex $$CS_{c}^*(G, A) \hookrightarrow C_{c}^*(G, A)$$ induces a homomorphism $$h^*:HS_{c}^*(G, A) \to H_{c}^*(G, A).$$ Clearly $h^*:HS_{c}^1(G, A) \to H_{c}^1(G, A)$ is injective. In dimension two, we have the following proposition, which is essentially \cite[Lemma 3.1]{Staic2}. We provide a proof here for the sake of completeness.

\begin{proposition}\label{proposition3.2}
The map $h^*:HS_{c}^2(G, A) \to H_{c}^2(G, A)$ is injective.
\end{proposition}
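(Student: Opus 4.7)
The plan is to show that whenever a symmetric continuous $2$-cocycle $\sigma$ equals $\partial^1\lambda$ for some $\lambda \in C_{c}^1(G,A)$, the cochain $\lambda$ is automatically symmetric, i.e.\ already lies in $CS_{c}^1(G,A)$. Once that is established, $\sigma$ is the coboundary of a symmetric continuous $1$-cochain, so $[\sigma]=0$ in $HS_{c}^2(G,A)$, yielding injectivity of $h^*$.

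Concretely, I would take $[\sigma] \in \Ker h^*$ represented by some $\sigma \in ZS_{c}^2(G,A)$, and pick any $\lambda \in C_{c}^1(G,A)$ with $\sigma = \partial^1\lambda$. To extract the symmetry condition $\lambda(g) = -g\lambda(g^{-1})$, I would exploit the first identity in \eqref{eqn4}, namely $\sigma(g,k) = -g\sigma(g^{-1},gk)$. Substituting $\sigma = \partial^1\lambda$ via \eqref{eqn1} and expanding $g\sigma(g^{-1}, gk) = \lambda(gk) - g\lambda(k) + g\lambda(g^{-1})$, the terms $\pm g\lambda(k)$ and $\pm \lambda(gk)$ cancel against those in $\sigma(g,k) = g\lambda(k) - \lambda(gk) + \lambda(g)$, leaving $\lambda(g) + g\lambda(g^{-1}) = 0$ for every $g \in G$. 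This is precisely the symmetry condition cutting out $CS_{c}^1(G,A)$ inside $C_{c}^1(G,A)$, so $\lambda$ itself lies in $CS_{c}^1(G,A)$.

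Since $\lambda$ was already continuous by hypothesis, the argument terminates at this point: $\lambda$ is a symmetric continuous $1$-cochain with $\partial^1\lambda = \sigma$, so $[\sigma]=0$ in $HS_{c}^2(G,A)$. (The second symmetry identity in \eqref{eqn4}, $\sigma(g,k) = -\sigma(gk, k^{-1})$, yields the identity $g\lambda(k) + gk\lambda(k^{-1}) = 0$ and leads to the same conclusion after specializing $g$ to the identity.) There is no genuine obstacle here; the proof reduces to a single algebraic cancellation, and, crucially, we do not need to replace $\lambda$ by any symmetrized version such as $\tfrac12(\lambda(g) - g\lambda(g^{-1}))$, which is fortunate because division by $2$ is not available in a general topological $G$-module.
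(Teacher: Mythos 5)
Your proposal is correct and follows essentially the same route as the paper: both show that any $\lambda \in C_{c}^1(G,A)$ with $\partial^1\lambda = \sigma$ is forced by the symmetry of $\sigma$ to satisfy $\lambda(g) = -g\lambda(g^{-1})$, hence already lies in $CS_{c}^1(G,A)$. Your execution is if anything slightly cleaner, since equating $\partial^1\lambda(g,k)$ directly with $-g\sigma(g^{-1},gk)$ cancels all $k$-dependent terms and yields the symmetry of $\lambda$ for every $g$ at once, whereas the paper compares the two symmetry identities after specializing $g=1$.
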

\begin{proof}
Let $\sigma$ represent an element in $\Ker(h^*)$. In other words, $\sigma \in ZS_{c}^2(G, A)\cap B_{c}^2(G, A)$. This implies that $\sigma$ is symmetric and there exists $\lambda \in C_{c}^1(G, A)$ such that $$\sigma (g,k)= \partial^1 \lambda(g,k)= g\lambda(k)-\lambda(gk) + \lambda (g)$$ for all $g, k \in G$. The symmetry of $\sigma$ gives,
$$\sigma(g,k)=-g\sigma(g^{-1}, gk)=-\lambda(gk) + g\lambda(k) -g\lambda(g^{-1})~\textrm{and}$$
$$\sigma(g,k)=-\sigma(gk, k^{-1})=-gk\lambda(k^{-1}) + \lambda(g) - \lambda(gk)~\textrm{for all}~g,k \in G.$$
By taking $g=1$ and equating the above two equations, we get $\lambda(k)= -k\lambda(k^{-1})$ for all $k \in G$. This shows that $\lambda \in CS_{c}^1(G, A)$ and hence $\sigma \in BS_{c}^2(G, A)$. Thus $\sigma$ represents the trivial element in $HS_{c}^2(G, A)$ and the map $h^*$ is injective.
\end{proof}

Note that the map $h^*$ need not be surjective in general. This will be illustrated by examples in Section \ref{section4}.

By Theorem \ref{hellerhu}, $H_{c}^2(G,A)$ classifies equivalence classes of topological group extensions of $G$ by $A$ admitting a continuous section.  In view of Proposition \ref{proposition3.2}, we would like to know which of these extensions correspond to $HS_{c}^2(G, A)$. Let $0 \to A \to E \to G \to 1$ be an extension of topological groups. We say that a section $s:G \to E$ is symmetric if $$s(g^{-1})=s(g)^{-1}~ \textrm{for all}~ g \in G.$$ For simplicity, we assume that $s$ satisfies the normalization condition $s(1)=1$. Let $\mathcal{C}(G,A)$ denote the set of equivalence classes of topological group extensions of $G$ by $A$ admitting a symmetric continuous section and being equipped with the given $G$-module structure. With these definitions, we prove the following theorem.

\begin{theorem}\label{theorem3.3}
Let $G$ be a topological group and $A$ be a topological $G$-module. Then there is a bijection $\Phi:\mathcal{C}(G,A) \to HS_{c}^2(G,A)$.
\end{theorem}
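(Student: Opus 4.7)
The plan is to follow the classical correspondence between $H_{c}^2(G, A)$ and topological extensions (Theorem \ref{hellerhu}), but with the extra requirement that the section be symmetric and the cocycle be invariant under $\Sigma_3$. The map $\Phi$ sends the class of $0 \to A \stackrel{i}{\to} E \stackrel{\pi}{\to} G \to 1$ (equipped with a chosen normalized symmetric continuous section $s$) to $[\sigma_s]$, where
\[
\sigma_s(g, k) := i^{-1}\bigl(s(g) s(k) s(gk)^{-1}\bigr).
\]
I would build its inverse from the twisted product $E_\sigma := A \times G$ with multiplication $(a, g)(b, k) := (a + gb + \sigma(g, k), gk)$.

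For $\Phi$, continuity of $\sigma_s$ follows from the continuity of $s$ and of the group operations, together with the continuity of $i^{-1}$ on $i(A)$ (which uses that $i$ is a closed embedding). The cocycle identity is the standard one, coming from associativity in $E$. The symmetry of $\sigma_s$ uses $s(g^{-1}) = s(g)^{-1}$ and the fact that conjugation by $s(g)$ realizes the $G$-action on $A$:
\[
i\bigl(-g\,\sigma_s(g^{-1}, gk)\bigr) = s(g)\bigl(s(g^{-1})s(gk)s(k)^{-1}\bigr)^{-1}s(g)^{-1} = s(g)s(k)s(gk)^{-1} = i(\sigma_s(g, k)),
\]
where the middle equality uses $s(g^{-1})^{-1} = s(g)$; an analogous computation gives $\sigma_s(g, k) = -\sigma_s(gk, k^{-1})$. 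To see that $\Phi$ is well-defined on equivalence classes, note that if $s, s'$ are two normalized symmetric continuous sections of the same extension, then $t(g) := i^{-1}(s(g)s'(g)^{-1})$ lies in $CS_{c}^1(G, A)$ (the symmetry of $t$ follows from that of $s, s'$) and $\sigma_s - \sigma_{s'} = \partial^1 t$; an equivalence $\phi : E \to E'$ transports a symmetric section of $E'$ back to one of $E$ yielding the same cocycle.

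For the inverse, given $[\sigma] \in HS_{c}^2(G, A)$ I would pick a normalized representative, whereupon symmetry forces $\sigma(g, g^{-1}) = -\sigma(1, g) = 0$, which is exactly what is needed for the tautological section $s(g) := (0, g)$ on $E_\sigma$ to satisfy $s(g^{-1}) = s(g)^{-1}$. Continuity of multiplication and inversion on $E_\sigma$ follows from that of $\sigma$ and of the $G$-action; the inclusion $a \mapsto (a, 1)$ is a closed embedding and the projection $(a, g) \mapsto g$ is open continuous, both because $E_\sigma$ carries the product topology of $A \times G$. I would conclude by verifying $\Phi \circ \Psi = \mathrm{id}$ directly (the tautological section recovers $\sigma$) and $\Psi \circ \Phi = \mathrm{id}$ via the map $(a, g) \mapsto i(a) s(g) \colon E_{\sigma_s} \to E$, whose continuity in both directions rests on the continuity of $s$ and $\pi$.

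The main obstacle I anticipate is ensuring that every class in $HS_{c}^2(G, A)$ admits a normalized representative: for a symmetric cocycle the value $c := \sigma(1, 1)$ is forced to satisfy $2c = 0$ but is not automatically $G$-invariant, so the naive correction by the constant coboundary $\mu \equiv -c$ need not be symmetric. Producing a genuinely symmetric continuous $1$-cochain $\mu$ with $\mu(1) = -c$, continuously in the topological setting, is the subtlest step in the construction of the inverse map. A secondary technical point is confirming that $E_\sigma$ really is a topological extension in the strong sense required (closed embedding $i$, open continuous $\pi$), which is where the product topology on $A \times G$ does its work and is what makes the equivalence with $E$ automatically open continuous.
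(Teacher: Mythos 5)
Your plan is essentially the paper's proof: the forward map via the factor set $\sigma_s(g,k)=i^{-1}\bigl(s(g)s(k)s(gk)^{-1}\bigr)$ of a symmetric continuous section, the inverse via the twisted product $E_\sigma=A\times G$ with the tautological section, and the same verifications of continuity, symmetry, independence of the section and invariance under equivalence. The ``main obstacle'' you single out, however, is not actually an obstacle, because no normalized representative is needed. For a symmetric cocycle $\sigma$ with $c=\sigma(1,1)$ one has $2c=0$, $\sigma(1,g)=c$, $\sigma(g,1)=gc$ and $\sigma(g,g^{-1})=-c$; the twisted product with the same multiplication is still a topological group, with identity $(-c,1)$, inverses $(a,g)^{-1}=(-g^{-1}a,g^{-1})$, and closed embedding $i(a)=(a-c,1)$. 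The tautological section $s(g)=(0,g)$ satisfies $s(g)s(g^{-1})=(\sigma(g,g^{-1}),1)=(-c,1)$, so it is symmetric (note that symmetry at $g=1$ only forces $s(1)^2=1$, not $s(1)=1$), and it recovers $\sigma$ on the nose since $s(g)s(h)s(gh)^{-1}=(\sigma(g,h)-c,1)=i(\sigma(g,h))$. None of the forward-direction computations use $s(1)=1$, so the clean route is to drop the normalization convention rather than to chase a symmetric continuous $\mu$ with $\mu(1)=-c$ (which, as you observe, need not exist since $c$ need not lie in $A^G$). Your worry is legitimate as a critique of the paper itself, which asserts $\sigma(g,1)=\sigma(1,g)=\sigma(1,1)=0$ ``by the normalization of the section'' at a point in the surjectivity argument where no section has yet been constructed.

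There is one genuine gap in your sketch of injectivity: verifying $\Psi\circ\Phi=\mathrm{id}$ via $(a,g)\mapsto i(a)s(g)$ only shows $E_{\sigma_s}\cong E$ for the particular cocycle $\sigma_s$ extracted from the chosen section of $E$. To conclude that $\Phi$ is injective you must also know that cohomologous symmetric continuous cocycles yield equivalent extensions, i.e.\ that $\Psi$ is well defined on cohomology classes; otherwise the equation $\Psi\circ\Phi=\mathrm{id}$ does not even parse, since $\Phi$ takes values in classes. This is routine to fill --- if $\sigma'=\sigma+\partial^1\lambda$ with $\lambda\in CS_c^1(G,A)$, then $(a,g)\mapsto(a+\lambda(g),g)$ is an equivalence $E_\sigma\to E_{\sigma'}$ --- and the paper fills it by replacing the section $s$ of $E$ with $t=i(\lambda)s$, checking that $t$ gives rise to $\sigma'$, and concluding $E\cong E_{\sigma'}\cong E'$; but it is a step your outline omits.
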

\begin{proof}
Let $0 \to A \stackrel{i}{\longrightarrow} E \stackrel{\pi}{\longrightarrow} G \to 1$ be a topological group extension of $G$ by $A$ admitting a symmetric continuous section $s: G \to E$ and corresponding to the given way in which $G$ acts on $A$. Every element of $E$ can be written uniquely as $i(a)s(g)$ for some $a \in A$ and $g \in G$. Since the extension corresponds to the given way in which $G$ acts on $A$, we have that $$ga = i^{-1}\big( s(g)i(a)s(g)^{-1} \big) ~\textrm{for all}~  a \in A ~\textrm{and}~ g \in G.$$ As $i$ and $s$ are continuous, we see that the action is continuous. Also, we have $$\pi(s(gh))= gh = \pi(s(g))\pi(s(h))= \pi(s(g)s(h))~\textrm{for all}~ g, h \in G.$$ Thus, there exists a unique element (say) $\sigma(g,h)$ in $A$ such that $$\sigma(g,h)= i^{-1} \big(s(g)s(h)s(gh)^{-1} \big).$$ Observe that $\sigma:G \times G \to A$ satisfies the condition 
\begin{equation}\label{eqn5}
\begin{split}
 g\sigma(h, k)- \sigma(gh, k) + \sigma(g, hk)- \sigma(g, h) = 0 & ~ \textrm{for all}~ g, h, k \in G.
\end{split}
\end{equation}
In other words, $\sigma$ is a 2-cocycle. Moreover, continuity of $i$ and $s$ implies that $\sigma$ is continuous. Finally, using the symmetry of $s$ and the action of $G$ on $A$, we show that $\sigma$ is in fact symmetric. That is, for all $g,h \in G$, we have
\begin{eqnarray}
-g\sigma(g^{-1}, gh) & = & -gi^{-1}\big(s(g^{-1})s(gh)s(h)^{-1}\big) \nonumber\\
& = & -gi^{-1} \big(s(g)^{-1}s(gh)s(h)^{-1}\big)\nonumber\\
& = & -i^{-1}\big(s(g)s(g)^{-1}s(gh)s(h)^{-1}s(g)^{-1} \big)\nonumber\\
& = & -i^{-1}\big(\big(s(g)s(h)s(gh)^{-1}\big)^{-1}\big)\nonumber\\
& = & i^{-1}\big( s(g)s(h)s(gh)^{-1} \big)\nonumber\\
& = & \sigma(g,h)\nonumber\\
\textrm{and}~-\sigma(gh, h^{-1}) & = & -i^{-1}\big( s(gh)s(h^{-1})s(g)^{-1}\big) \nonumber\\
& = & -i^{-1}\big(s(gh)s(h)^{-1}s(g)^{-1} \big) \nonumber\\
& = & -i^{-1}\big(\big(s(g)s(h)s(gh)^{-1}\big)^{-1}\big) \nonumber\\
& = & i^{-1}\big(s(g)s(h)s(gh)^{-1}\big)\nonumber\\
& = & \sigma(g,h).\nonumber
\end{eqnarray}
Thus, $\sigma$ gives an element in $HS_{c}^2(G,A)$.

We need to show that the cohomology class of $\sigma$ is independent of the choice of the section. Let $s, t:G \to E$ be two symmetric continuous sections. As above we get symmetric continuous 2-cocycles $\sigma,\mu:G \times G \to A$ such that for all $g, h \in G$, we have $$\sigma(g,h)= i^{-1} \big(s(g)s(h)s(gh)^{-1} \big)$$ and $$\mu(g,h)= i^{-1} \big(t(g)t(h)t(gh)^{-1} \big).$$ Since $s(g)$ and $t(g)$ satisfy $\pi(s(g))=g=\pi(t(g))$, there exists a unique element (say) $\lambda(g) \in A$ such that $$\lambda(g)= i^{-1} \big(s(g)t(g)^{-1} \big).$$ This yields a 1-cochain $\lambda: G \to A$ which is continuous and symmetric, as
\begin{eqnarray}
\tau_1 \lambda(g) & = & -g \lambda(g^{-1}) \nonumber\\
& = & -gi^{-1}\big(s(g^{-1})t(g^{-1})^{-1} \big) \nonumber\\
& = & -gi^{-1}\big(s(g)^{-1}t(g)\big) \nonumber\\
& = & -i^{-1}\big(s(g)s(g)^{-1}t(g)s(g)^{-1}\big) \nonumber\\
& = & -i^{-1}\big((s(g)t(g)^{-1})^{-1}\big) \nonumber\\
& = & i^{-1}\big(s(g)t(g)^{-1}\big) \nonumber\\
& = & \lambda(g)\nonumber.
\end{eqnarray}
Thus, we have that $$\sigma(g,h)-\mu(g,h)= g\lambda(h)-\lambda(gh)+ \lambda(g).$$ In other words, $\sigma-\mu \in BS_{c}^2(G, A)$ and hence $[\sigma]=[\mu]$ in $HS_{c}^2(G, A)$.

Let $0 \to A \stackrel{i'}{\to} E' \stackrel{\pi'}{\to} G \to 1 $ be an extension equivalent to $0 \to A \stackrel{i}{\to} E \stackrel{\pi}{\to} G \to 1$ via an open continuous isomorphism $\phi:E \to E'$. Then $s'= \phi s:G \to E'$ is a symmetric continuous section. It is clear that the 2-cocycle corresponding to $s'$ is same as the one corresponding to $s$. Hence equivalent extensions gives the same element in $HS_{c}^2(G,A)$.

Now we can define $$\Phi: \mathcal{C}(G,A) \to HS_{c}^2(G,A)$$ by mapping an equivalence class of extensions to the corresponding cohomology class as obtained above. The above arguments show that $\Phi$ is well defined.

We first prove that $\Phi$ is surjective. Let $\sigma \in ZS_{c}^2(G,A)$ be a symmetric continuous 2-cocyle representing an element in $HS_{c}^2(G,A)$. The symmetry of the 2-cocyle gives the equation \eqref{eqn4}. Let 
$$E_{\sigma}:=A \times G$$ 
be equipped with the product topology. Define a binary operation on $E_{\sigma}$ by 
$$(a, g)(b, h)=\big(a + gb + \sigma(g, h), gh\big)~\textrm{for all}~a,b \in A~\textrm{and}~g,h \in G.$$
It is routine to check that this binary operation gives a group structure on $E_{\sigma}$. Since $A$ is a topological $G$-module and the 2-cocyle $\sigma$ is continuous, the group operation from $E_{\sigma} \times E_{\sigma} \to E_{\sigma}$ and the inverting operation from $E_{\sigma} \to E_{\sigma}$ are continuous with respect to the product topology on $E_{\sigma}$. Hence, $E_{\sigma}$ is a topological group.

Clearly, the map $\pi:E_{\sigma} \to G$ given by $\pi(a,g)=g$ is an open continuous homomorphism; and the map $i:A \to E_{\sigma}$ given by $i(a)= (a,1)$ is an embedding of $A$ onto the closed subgroup $i(A)$ of $E_{\sigma}$. This gives the following extension of topological groups $$0 \to A \stackrel{i}{\longrightarrow} E_{\sigma} \stackrel{\pi}{\longrightarrow} G \to 1.$$ The extension has an obvious continuous section $s:G \to E_{\sigma}$, given by $s(g)=(0,g)$. Using the group operation and the symmetry of $\sigma$, we get
$$s(g)\big(s(g^{-1})s(gh)\big) = \big(g\sigma(g^{-1}, gh) + \sigma(g, h), gh\big) = (0, gh) = s(gh)~ \textrm{and}$$
$$\big(s(gh)s(h^{-1}) \big)s(h) = \big(\sigma(g, h) + \sigma(gh, h^{-1}), gh\big) = (0, gh) = s(gh).$$
This gives $s(g^{-1})=s(g)^{-1}$ and hence the section $s$ is symmetric.

Note that by \eqref{eqn5} and the normalization of the section, we have $$\sigma(g,1)= \sigma(1,g)= \sigma(1,1)=0~\textrm{for all}~g \in G.$$ For all $a \in A$ and $g \in G$, we have
\begin{eqnarray}
i^{-1}\big( s(g)i(a)s(g)^{-1} \big) & = &  i^{-1}\big( (0,g)(a,1)(0,g)^{-1} \big)\nonumber\\
& = &  i^{-1}\big( (ga + \sigma(g,1),g)(0,g)^{-1} \big)\nonumber\\
& = & i^{-1}\big( (ga + \sigma(g,1),g)(-g^{-1}\sigma(g,g^{-1}), g^{-1}) \big)\nonumber\\
& = & i^{-1}\big( (ga + \sigma(g,1) + g(-g^{-1}\sigma(g,g^{-1})) + \sigma(g,g^{-1}), 1) \big)\nonumber\\
& = & i^{-1}\big( (ga + \sigma(g,1), 1) \big)\nonumber\\
& = & i^{-1}\big( (ga, 1) \big)\nonumber\\
& = & ga.\nonumber
\end{eqnarray}
Thus, the extension $0 \to A \to E_{\sigma} \to G \to 1$ corresponds to the given $G$-module structure on $A$.

Next, for all $g, h \in G$, we have
\begin{eqnarray}
\sigma(g,h) & = &  i^{-1}\big((\sigma(g,h),1) \big)\nonumber\\
& = &  i^{-1}\big( (\sigma(g,h)-\sigma(1,gh),1) \big)\nonumber\\
& = &  i^{-1}\big( (\sigma(g,h)-\sigma(1,gh),1)(0,gh)(0,gh)^{-1} \big)\nonumber\\
& = &  i^{-1}\big( (\sigma(g,h),gh)(0,gh)^{-1} \big)\nonumber\\
& = &  i^{-1}\big( (0,g)(0,h)(0,gh)^{-1} \big)\nonumber\\
& = &  i^{-1}\big( s(g)s(h)s(gh)^{-1} \big).\nonumber
\end{eqnarray}
Thus, $\sigma$ is the 2-cocycle corresponding to the section $s$. Hence $\Phi$ is surjective.

Finally, we prove that $\Phi$ is injective. Let $0 \to A \stackrel{i}{\to} E \stackrel{\pi}{\to} G \to 1$ and $0 \to A \stackrel{i'}{\to} E' \stackrel{\pi'}{\to} G \to 1$ be two topological group extensions admitting symmetric continuous sections $s$ and $s'$, respectively. Let $\sigma$ and $\sigma'$ be the 2-cocyles associated to $s$ and $s'$, respectively. Suppose that $\sigma$ and $\sigma'$ represent the same element in $HS_{c}^2(G,A)$. In other words, $\sigma'- \sigma = \partial^1(\lambda)$ for some $\lambda \in CS_{c}^1(G,A)$. Define $t:G \to E$ by $$t(g)= i\lambda(g)s(g)~\textrm{for all}~ g \in G.$$ We can see that $t$ is a continuous section to $\pi$ and also gives rise to the 2-cocycle $\sigma'$ as
\begin{eqnarray}
i^{-1}\big( t(g)t(h)t(gh)^{-1} \big) & = & i^{-1}\big(i\lambda(g)s(g)i\lambda(h)s(h) s(gh)^{-1} i\lambda(gh)^{-1} \big)\nonumber\\
& = &  i^{-1}\big( i\lambda(g)s(g)i\lambda(h)s(g)^{-1}s(g)s(h) s(gh)^{-1} i\lambda(gh)^{-1}\big)\nonumber\\
& = &  i^{-1}\big( i\lambda(g)s(g)i\lambda(h)s(g)^{-1}i(\sigma(g,h)) i\lambda(gh)^{-1}\big)\nonumber\\
& = &  \lambda(g) + i^{-1}\big( s(g)i\lambda(h)s(g)^{-1} \big) + \sigma(g,h) - \lambda(gh)\nonumber\\
& = &  \lambda(g) + g \lambda(h)+ \sigma(g,h) -\lambda(gh)\nonumber\\
& = &  \sigma'(g,h).\nonumber
\end{eqnarray}

Let $0 \to A \to E_{\sigma'} \to G \to 1$ be the extension associated to $\sigma'$. Define $\phi_t:E_{\sigma'} \to E$ by $$\phi_t(a,g)= i(a)t(g)~\textrm{for all}~a \in A~ \textrm{and}~ g \in G.$$ Clearly, $\phi_t$ is continuous; and it is a homomorphism because
\begin{eqnarray}
\phi_t\big( (a,g)(b,h)\big) & = & \phi_t(a+gb+\sigma'(g,h), gh)\nonumber\\
& = &  i(a+gb+\sigma'(g,h))t(gh)\nonumber\\
& = &  i(a)i(gb)i(\sigma'(g,h))t(gh)\nonumber\\
& = &  i(a)t(g)i(b)t(g)^{-1}t(g)t(h)t(gh)^{-1}t(gh)\nonumber\\
& = &  i(a)t(g)i(b)t(h)\nonumber\\
& = &  \phi_t(a,g) \phi_t(b,h).\nonumber
\end{eqnarray}
It is easy to see that $\phi_t$ is bijective with inverse $i(a)t(g) \mapsto (a,g)$. As both $E_{\sigma'}$ and $E$ have the product topology, the inverse homomorphism is also continuous and hence $\phi_t$ is an equivalence of extensions $E_{\sigma'} \cong E$. Similarly, define $\phi_{s'}:E_{\sigma'} \to E'$ by $$\phi_{s'}(a,g)= i'(a)s'(g)~\textrm{for all}~a \in A~ \textrm{and}~ g \in G.$$ Just as above, we can see that $\phi_{s'}$ is an equivalence of extensions $E_{\sigma'} \cong E'$. Hence we get an equivalence of extensions $E \cong E'$ proving that $\Phi$ is injective. The proof of the theorem is now complete.
\end{proof}

We conclude this section with the following remarks.

\begin{remark}
Recall that a topological group $G$ is said to be a free topological group if there exists a completely regular space $X$ such that:
\begin{enumerate}
\item [(i)] $X$ is topologically embeddable in $G$;
\item [(ii)] when embedded $X$ generates $G$;
\item [(iii)] every continuous map from $X$ to a topological group $H$ can be extended to a unique continuous homomorphism from $G$ to $H$.
\end{enumerate}

If $G$ is a free topological group, then $H_{c}^2(G, A)=0$ by \cite[Proposition 5.6]{Hu}. Thus, if $G$ is a free topological group and $A$ is a topological $G$-module, then $HS_{c}^2(G, A)=0$ by Proposition \ref{proposition3.2}.
\end{remark}

\begin{remark}
Let $G$ be a topological group and $A$ be a topological $G$-module. Restriction to the underlying abstract group structure gives, for each $n \geq 0$, the homomorphisms (with the same notation) $$r^*:H^n_{c}(G,A) \to H^n(G,A)$$ and $$r^*:HS^n_{c}(G,A) \to HS^n(G,A).$$ It is easy to see that the following diagram is commutative
$$
\xymatrix{
HS_{c}^2(G, A) \ar[d]^{h^*} \ar[r]^{r^*} & HS^2(G, A) \ar[d]^{h^*}\\
H_{c}^2(G, A) \ar[r]^{r^*} & H^2(G, A).}
$$
Note that both the vertical maps are injective by Proposition \ref{proposition3.2} and \cite[Lemma 3.1]{Staic2}. When $G$ and $A$ are locally compact groups, then Moore \cite{Moore2} defined a cohomology theory $H_{m}^*(G, A)$ with measurable cochains. He showed that if $G$ is perfect, then the restriction map $$H_{m}^2(G, A) \to  H^2(G, A)$$ is injective \cite[Theorem 2.3]{Moore2}. Further, he showed that, if $G$ is a profinite group and $A$ a discrete $G$-module or $G$ is a Lie group and $A$ a finite dimensional $G$-vector space, then the restriction map $$H_{c}^2(G, A) \stackrel{\cong}{\to} H_{m}^2(G, A)$$ is an isomorphism \cite[p.32]{Moore1}. When the group $G$ is perfect, combining the above two results of Moore shows that the restriction map $$r^*:H_{c}^2(G, A) \to H^2(G, A)$$ is injective. This together with the injectivity of the vertical maps $h^*$ in the above commutative diagram proves the following: Let $G$ be a perfect group satisfying either of the following conditions:
\begin{enumerate}
\item[(i)] $G$ is a profinite group and $A$ a discrete $G$-module;
\item[(ii)] $G$ is a Lie group and $A$ a finite dimensional $G$-vector space.
\end{enumerate}
Then the restriction map $r^*:HS_{c}^2(G, A) \to HS^2(G, A)$ is injective. We shall show in Example \ref{example4.4} that this map is not injective in general.
\end{remark}

\section{Some examples}\label{section4}
In this section, we give some examples to illustrate the symmetric continuous cohomology introduced in the previous section.

\begin{example}\label{example4.1}
We give an example of a topological group and a topological module for which the two cohomology theories $HS_c^*(-,-)$ and $H_c^*(-,-)$ are different. More precisely, we show that, the map $h^*:HS_{c}^2(-,-) \to H_{c}^2(-,-)$ is not surjective in general.

First consider the extension $$0 \to \mathbb{Z} \stackrel{i}{\to} \mathbb{Z} \times \mathbb{Z}/2 \stackrel{\pi}{\to} \mathbb{Z}/4 \to 0,$$ where $i(n)= (2n, \overline{n})$ and $\pi(n, \overline{m})= \overline{n + 2m}$. Here $\overline{n}$ denotes the class of $n$ modulo 2 or 4 depending on the context. Let $s:\mathbb{Z}/4 \to \mathbb{Z} \times \mathbb{Z}/2$ be the section given by $$s(\overline{0})= (0,\overline{0}),~ s(\overline{1})= (-1,\overline{1}),~ s(\overline{2})=(0,\overline{1})~ \textrm{and}~ s(\overline{3})=(1,\overline{1}).$$ Equipping each group with the discrete topology, we can consider this as an extension of topological groups. Then $s$ is clearly a symmetic continuous section. Let $G$ be a non-discrete abelian topological group. Consider the extension 
\begin{equation}\label{eqn6}
\begin{split}
0 \to \mathbb{Z} \times G \stackrel{i'}{\to} \mathbb{Z} \times \mathbb{Z}/2 \times G \times G \stackrel{\pi'}{\to} \mathbb{Z}/4\times G \to 0 &,
\end{split}
\end{equation}
where $i'(n, g)= (i(n), g, 0)$ and $\pi'(n, \overline{m}, g, h)= (\pi(n, \overline{m}), h)$. This is a non-split extension of topological groups. Note that the extension also admits a symmetric continuous section $s':\mathbb{Z}/4 \times G \to \mathbb{Z}\times \mathbb{Z}/2 \times G \times G$ given by $$s'(\overline{n}, h)= (s(\overline{n}), 0, h).$$ Therefore, \eqref{eqn6} represents a unique  non-trivial element in $HS_{c}^2(\mathbb{Z}/4 \times G,  \mathbb{Z} \times G)$.

Next consider the extension $$0 \to \mathbb{Z} \stackrel{j}{\to} \mathbb{Z} \stackrel{\nu}{\to} \mathbb{Z}/4 \to 0,$$ where $j(n)= 4n$ and $\nu(n)= \overline{n}$. With the discrete topology, we can consider this as an extension of topological groups. This extension does not admit any symmetric continuous section. For any non-discrete abelian topological group $G$, we get the following non-split extension of topological groups
\begin{equation}\label{eqn7}
\begin{split}
0 \to \mathbb{Z} \times G \stackrel{j'}{\to} \mathbb{Z} \times G \times G \stackrel{\nu'}{\to} \mathbb{Z}/4\times G \to 0 &,
\end{split}
\end{equation}
where $j'(n, g)= (j(n), g, 0)$ and $\nu'(n, g, h)= (\nu(n), h)$. Let $s:\mathbb{Z}/4 \to \mathbb{Z}$ be any continuous section, which exists as the topologies are discrete. Then the section $s':\mathbb{Z}/4 \times G \to \mathbb{Z}\times G \times G$ given by $$s'(\overline{n}, h)= (s(\overline{n}), 0, h)$$ is continuous. However, there does not exist any symmetric continuous section. Therefore, \eqref{eqn7} represents a unique non-trivial element in $H_{c}^2(\mathbb{Z}/4 \times G,  \mathbb{Z} \times G)$, but does not represent an element in $HS_{c}^2(\mathbb{Z}/4 \times G,  \mathbb{Z} \times G)$.
\end{example}

\begin{example}\label{example4.2}
We now give an example which is specific to the continuous case. Let $$\mathcal{H}_3(\mathbb{R})= \left\{\left.\left(\begin{array}{ccc}1&x&z\\0&1&y\\0&0&1\end{array}\right)\ \right|\ x,y,z\in \mathbb{R}\right\}$$ be the 3-dimensional real Heisenberg group. Note that $\mathcal{H}_3(\mathbb{R})$ is a non-abelian topological group (in fact a Lie group) with respect to matrix multiplication. Let 
$$A = \left\{\left.\left(\begin{array}{ccc}1&0&z\\0&1&0\\0&0&1\end{array}\right)\ \right|\ z\in \mathbb{R}\right\}$$ be the center of $\mathcal{H}_3(\mathbb{R})$. Then $A \cong \mathbb{R}$ and $\mathcal{H}_3(\mathbb{R})/A \cong \mathbb{R}^2$ as a topological group. This gives an extension of topological groups 
\begin{equation}\label{eqn8}
\begin{split}
0 \to \mathbb{R} \to \mathcal{H}_3(\mathbb{R}) \to \mathbb{R}^2 \to 0.
\end{split}
\end{equation}
The extension is non-split as an extension of topological groups as this would make the group $\mathcal{H}_3(\mathbb{R})$ to be abelian. However, the extension  admits a section $s: \mathbb{R}^2 \to \mathcal{H}_3(\mathbb{R})$ given by $$s(x,y)= \left(\begin{array}{ccc}1&x&\frac{xy}{2}\\0&1&y\\0&0&1\end{array}\right)$$ which is continuous and symmetric, since $$s(-x,-y)= \left(\begin{array}{ccc}1&-x&\frac{xy}{2}\\0&1&-y\\0&0&1\end{array}\right)= s(x,y)^{-1}.$$ Therefore, \eqref{eqn8} represents a unique non-trivial element in $HS_{c}^2(\mathbb{R}^2,  \mathbb{R})$.
\end{example}

\begin{example}\label{example4.3}
We give an example of a topological group and a topological module for which $HS_c^2(-,-) \cong H_c^2(-,-)$.

Examples of (abelian) topological group extensions admitting a symmetric continuous section are guaranteed by a well known result of Michael \cite[Proposition 7.2]{Michael}, which states that: If $X$ and $Y$ are real or complex Banach spaces regarded as topological groups with respect to their addition and $\pi:X \to Y$ is a surjective continuous linear transformation, then there exists a continuous map $s:Y \to X$ such that $ \pi s(y)=y$ and $s(-y)=-s(y)$ for all $y \in Y$.

Let $Y$ and $A$ be infinite dimensional real or complex Banach spaces. Consider $A$ as a trivial $Y$-module. If $0 \to A \to X \to Y \to 0$ is any 
extension of Banach spaces regarded as an extension of topological groups, then the map $X \to Y$ always admits a symmetric continuous section by the above mentioned result of Michael. This together with Proposition \ref{proposition3.2} shows that $HS_c^2(Y,A) \cong H_c^2(Y,A)$.
\end{example}

\begin{example}\label{example4.4}
Let $G$ be a topological group and $A$ be a topological $G$-module. As announced in the previous section, we give an example to show that the homomorphism $$r^*:HS^2_{c}(G,A) \to HS^2(G,A)$$ is not injective in general.

Let $X$ be an infinite dimensional real or complex Banach space and let $A$ be a non-complemented subspace of $X$. The quotient map $X \to X/A$ admits a symmetric continuous section by the above mentioned result of Michael. Since $A$ is non-complemented in $X$, the extension is non-split as an extension of topological groups. But, $X$ is isomorphic to $A \times X/A$ as an abelian group and the extension is split as an extension of abstract groups. Hence $r^*: HS^2_{c}(X/A,A) \to HS^2(X/A,A)$ is not injective.

Let us consider a particular example. Let $k$ be the field of real or complex numbers and $\ell^{\infty}$ be the space of all bounded sequences $x=(x_n)_{n=1}^{\infty}$, where $x_n \in k$ for each $n \geq 1$. Note that $\ell^{\infty}$ is a Banach space with respect to the norm $\|x\|_\infty = \sup_n |x_n|$. Let $c_0$ be the subspace of $\ell^{\infty}$ consisting of all sequences whose limit is zero. This is a closed subspace of $\ell^{\infty}$ and hence a Banach space. By a well-known result due to Phillips \cite[p.33, Corollary 4]{Day}, $c_0$ is a non-complemented subspace of $\ell^{\infty}$ and hence $$0 \to c_0 \to \ell^{\infty} \to \ell^{\infty}/{c_0} \to 0$$ is a non-split extension of topological groups. We would like to mention that there is a general method of constructing non-split extensions of Banach spaces due to Kalton and Peck \cite{Kalton}.
\end{example}

\begin{example}\label{example4.5}
We now give an example to show that the restriction homomorphism $r^*$ is not surjective in general.

Consider the 3-dimensional real Heisenberg group $\mathcal{H}_3(\mathbb{R})$ as an abstract group. Consider the center $A \cong \mathbb{R}$ of $\mathcal{H}_3(\mathbb{R})$ as a topological group with the discrete topology and consider $\mathcal{H}_3(\mathbb{R})/A \cong \mathbb{R}^2$ as a topological group with the usual topology. Then regarding 
\begin{equation}\label{eqn9}
\begin{split}
0 \to \mathbb{R} \to \mathcal{H}_3(\mathbb{R}) \to \mathbb{R}^2 \to 0
\end{split}
\end{equation}
as an extension of abstract groups, we see that it is non-split and admits a symmetric section $s:\mathbb{R}^2 \to \mathcal{H}_3(\mathbb{R})$. Thus, the extension \eqref{eqn9} represents a non-trivial element in $HS^2(\mathbb{R}^2, \mathbb{R})$.

Suppose that, there is a topology on $\mathcal{H}_3(\mathbb{R})$ making \eqref{eqn9} into an extension of topological groups admitting a symmetric continuous section and inducing the underlying abstract group extension. Then $\mathcal{H}_3(\mathbb{R})$ is a topological group with the product topology $\mathbb{R}^2 \times \mathbb{R}$. In particular, the map $I:\mathcal{H}_3(\mathbb{R}) \to \mathcal{H}_3(\mathbb{R})$ sending each matrix to its inverse must be continuous. But this is not true. Consider the open set $$U=\left\{\left.\left(\begin{array}{ccc}1&x&0\\0&1&y\\0&0&1\end{array}\right)\ \right|\ x,y \in (-1,1) \right\}$$ in $\mathcal{H}_3(\mathbb{R})$. Then 
\begin{equation*}
\begin{split}
I^{-1}(U) & =  \left\{\left.\left(\begin{array}{ccc}1&x&z\\0&1&y\\0&0&1\end{array}\right)\ \right|\  \left(\begin{array}{ccc}1&-x&xy-z\\0&1&-y\\0&0&1\end{array}\right)\  \in U  \right\}\\
& = \left\{\left.\left(\begin{array}{ccc}1&x&z\\0&1&y\\0&0&1\end{array}\right)\ \right|\ -x,-y \in (-1,1)~\textrm{and}~xy=z \right\}\\
& = \left\{\left.\left(\begin{array}{ccc}1&x&xy\\0&1&y\\0&0&1\end{array}\right)\ \right|\ x,y \in (-1,1) \right\}\\
\end{split}
\end{equation*}
is not open in $\mathcal{H}_3(\mathbb{R})$. Hence, the element represented by \eqref{eqn9} in $HS^2(\mathbb{R}^2, \mathbb{R})$ has no pre-image in $HS_c^2(\mathbb{R}^2, \mathbb{R})$.
\end{example}
\bigskip

\section{Properties of symmetric continuous cohomology}\label{section5}
Let $G$ and $G'$ be topological groups. Let $A$ be a topological $G$-module and $A'$ a topological $G'$-module. We say that a pair $(\alpha, \beta)$ of continuous group homomorphisms $\alpha: G' \to G$ and $\beta:A  \to A'$ is compatible if the following diagram commutes
$$\begin{CD}
 G@.\times    @.   A   @>>> A\\
 @A{\alpha}AA   @. @VV{\beta}V @VV{\beta}V\\
  {G'}@.\times@. {A'} @>>>{A'}.
\end{CD}$$
In other words, $g'\beta(a) = \beta(\alpha(g')a)$ for all $a \in A$ and $g' \in G$. For simplicity, we write $\psi=(\alpha, \beta)$. Under these conditions, we have the following proposition.

\begin{proposition}\label{proposition5.1}
There is a homomorphism of cohomology groups $\psi^n:HS^n_{c}(G,A) \to HS^n_{c}(G', A')$ for each $n \geq 0$.
\end{proposition}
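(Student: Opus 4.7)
The plan is to build $\psi^n$ in the standard way, namely by pulling cochains back along $\alpha$ and pushing their values forward through $\beta$, and then to verify that this cochain map is continuous, commutes with $\partial^n$, and commutes with the $\Sigma_{n+1}$-action, so that it descends to the subcomplex of symmetric cochains and finally to cohomology.

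Concretely, I would define $\psi^n : C_c^n(G,A) \to C_c^n(G',A')$ by
\[
\psi^n(\sigma)(g'_1,\dots,g'_n) \;=\; \beta\!\left(\sigma\bigl(\alpha(g'_1),\dots,\alpha(g'_n)\bigr)\right),
\]
for $\sigma \in C_c^n(G,A)$ and $(g'_1,\dots,g'_n) \in (G')^n$. Continuity of $\psi^n(\sigma)$ is immediate from the continuity of $\alpha$, $\sigma$, and $\beta$. The fact that $\psi^n$ is a homomorphism of abelian groups is clear, and a direct substitution into the coboundary formula \eqref{eqn1} shows $\partial^n \psi^n = \psi^{n+1}\partial^n$: the term $g_1\sigma(g_2,\dots,g_{n+1})$ becomes $\alpha(g'_1)\sigma(\alpha(g'_2),\dots,\alpha(g'_{n+1}))$ on the inside, and the compatibility relation $g'\beta(a)=\beta(\alpha(g')a)$ converts this into $g'_1 \beta(\sigma(\alpha(g'_2),\dots,\alpha(g'_{n+1})))$; the middle and last terms use only that $\alpha$ is a homomorphism.

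The key step is checking that $\psi^n$ is $\Sigma_{n+1}$-equivariant, so that it restricts to a map $CS_c^n(G,A) \to CS_c^n(G',A')$. Since the transpositions $\tau_i$ generate $\Sigma_{n+1}$, it suffices to verify $\psi^n(\tau_i \sigma)=\tau_i \psi^n(\sigma)$ for each $i$. For $1<i<n$, the formula \eqref{eqn2} involves only multiplications and inversions in $G$ and a single overall sign, and these are respected by $\alpha$, so the identity is automatic. For the two boundary cases $i=1$ and $i=n$, an outer factor $g_1$ (respectively $g_n$) appears acting on an element of $A$; here the compatibility of $(\alpha,\beta)$ is exactly what is needed to slide the action inside $\beta$ and identify the result with $\tau_i \psi^n(\sigma)$. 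For example, for $i=1$,
\[
(\tau_1 \psi^n(\sigma))(g'_1,\dots,g'_n)
= -g'_1\beta\bigl(\sigma(\alpha(g'_1)^{-1},\alpha(g'_1)\alpha(g'_2),\alpha(g'_3),\dots,\alpha(g'_n))\bigr),
\]
which by compatibility equals $-\beta\bigl(\alpha(g'_1)\sigma(\alpha(g'_1)^{-1},\dots)\bigr)=\psi^n(\tau_1\sigma)(g'_1,\dots,g'_n)$.

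Having established all of the above, $\psi^n$ restricts to a cochain map $CS_c^*(G,A)\to CS_c^*(G',A')$ of complexes, and I would then define $\psi^n$ on cohomology as the induced map on $n$-th cohomology groups; that it is a well-defined homomorphism $HS_c^n(G,A)\to HS_c^n(G',A')$ is then formal. The only nontrivial piece is the $\Sigma_{n+1}$-equivariance, which is the main obstacle, but as indicated it reduces immediately to the compatibility hypothesis $g'\beta(a)=\beta(\alpha(g')a)$; everything else is a routine check.
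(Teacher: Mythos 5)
Your proposal is correct and follows essentially the same route as the paper: define $\psi^n(\sigma)=\beta\circ\sigma\circ\alpha^{\times n}$, use continuity of $\alpha,\beta$, the compatibility relation for the $\tau_1$ and $\tau_n$ cases (and for the first term of $\partial^n$), and the homomorphism property of $\alpha$ for the middle transpositions, then descend to cohomology. The only cosmetic difference is that you verify full $\Sigma_{n+1}$-equivariance of the cochain map, whereas the paper only checks that symmetric cochains are sent to symmetric cochains; the computation is identical.
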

\begin{proof}
Fix $n \geq 0$. For each $\sigma \in CS_{c}^n(G,A)$, define $\sigma':{G'}^n \to A'$ by $$\sigma'(g'_1,g'_2,g'_3,...,g'_n)= \beta\big( \sigma(\alpha(g'_1),\alpha(g'_2), \alpha(g'_3),...,\alpha(g'_n)) \big)~ \textrm{for all}~(g'_1,g'_2,g'_3,...,g'_n)\in {G'}^n.$$ Clearly $\sigma'$ is continuous being composite of continuous maps. Next we show that $\sigma'$ is symmetric. We have
\begin{eqnarray}
&  & \tau_1\sigma'(g'_1,g'_2,g'_3,...,g'_n)\nonumber\\
& = & -g'_1 \sigma'\big( {g'_1}^{-1},g'_1g'_2,g'_3,...,g'_n \big)\nonumber\\
& = & -g'_1 \beta\big( \sigma(\alpha({g'_1}^{-1}),\alpha(g'_1g'_2), \alpha(g'_3),...,\alpha(g'_n)) \big)\nonumber\\
& = & \beta\big (-\alpha(g'_1) \sigma(\alpha(g'_1)^{-1},\alpha(g'_1)\alpha(g'_2), \alpha(g'_3),...,\alpha(g'_n)) \big)~\textrm{by compatibility}\nonumber\\
& = & \beta\big (\sigma(\alpha(g'_1),\alpha(g'_2), \alpha(g'_3),...,\alpha(g'_n)) \big)~\textrm{by}~ \tau_1\sigma=\sigma\nonumber\\
& = & \sigma'(g'_1,g'_2,g'_3,...,g'_n).\nonumber
\end{eqnarray}
For $2 \leq i \leq n-1$, we have
\begin{eqnarray}
&  & \tau_i\sigma'(g'_1,g'_2,g'_3,...,g'_n)\nonumber\\
& = & -\sigma'\big( g'_1,...,g'_{i-2},g'_{i-1}g'_i, {g'_i}^{-1},g'_ig'_{i+1}, g'_{i+2},...,g'_n \big)\nonumber\\
& = & -\beta\big(\sigma(\alpha(g'_1),...,\alpha(g'_{i-2}),\alpha(g'_{i-1}g'_i), \alpha({g'_i}^{-1}),\alpha(g'_ig'_{i+1}),\alpha(g'_{i+2}),...,\alpha(g'_n) ) \big)\nonumber\\
& = & \beta\big(-\sigma(\alpha(g'_1),...,\alpha(g'_{i-2}),\alpha(g'_{i-1})\alpha(g'_i), \alpha(g'_i)^{-1},\alpha(g'_i)\alpha(g'_{i+1}),\alpha(g'_{i+2}),...,\alpha(g'_n) ) \big)\nonumber\\
& = & \beta\big (\sigma(\alpha(g'_1),\alpha(g'_2), \alpha(g'_3),...,\alpha(g'_n)) \big)~\textrm{by}~ \tau_i\sigma=\sigma\nonumber\\
& = & \sigma'(g'_1,g'_2,g'_3,...,g'_n).\nonumber
\end{eqnarray}
Similarly, we can see that $\tau_n\sigma'(g'_1,g'_2,g'_3,...,g'_n)= \sigma'(g'_1,g'_2,g'_3,...,g'_n)$. This shows that $\sigma'$ is symmetric, and hence an element of $CS_{c}^n(G',A')$.

Define $\psi^n:CS^n_{c}(G,A) \to CS^n_{c}(G', A')$ by $\psi^n(\sigma)=\sigma'$. It it routine to check that $\psi^n$ is a homomorphism commuting with the coboundary operators, that is, the following diagram commutes
$$
\xymatrix{
CS^n_{c}(G,A) \ar[d]^{\partial^n} \ar[r]^{\psi^n} & CS^n_{c}(G', A') \ar[d]^{\partial^n}\\
CS^{n+1}_{c}(G,A) \ar[r]^{\psi^{n+1}} & CS^{n+1}_{c}(G', A').}
$$

This shows that $\psi^n$ preserves both cycles and boundaries and hence defines a map $$\psi^n:HS^n_{c}(G,A) \to HS^n_{c}(G', A')$$ given by $$\psi^n([\sigma])=[\sigma'].$$ It is again routine to check that $\psi^n$ is a homomorphism. This completes the proof.
\end{proof}

In particular, for $G=G'$ and $\alpha=id_G$, we have a homomorphism $\beta^*: HS^*_{c}(G,A)\to HS^*_{c}(G,A')$. The following is an immediate consequence of Proposition \ref{proposition5.1}.

\begin{corollary}
The following statements hold:
\begin{enumerate}
\item Let $H$ be a subgroup of $G$. Then the compatible pair of homomorphisms, the inclusion map $H \hookrightarrow G$ and the identity map $A \to A$, gives the restriction homomorphism $HS^*_{c}(G,A)\to HS^*_{c}(H,A)$.
\item Let $H$ be a normal subgroup of $G$. Then the compatible pair of homomorphisms, the quotient map $G \to G/H$ and the inclusion map $A^H \hookrightarrow A$, gives the inflation homomorphism $HS^*_{c}(G/H,A^H)\to HS^*_{c}(G,A)$.
\end{enumerate}
\end{corollary}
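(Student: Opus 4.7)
The plan is to deduce both claims directly from Proposition~\ref{proposition5.1} by exhibiting, in each case, a compatible pair $(\alpha,\beta)$ of continuous homomorphisms and appealing to the proposition.

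For part~(1), I would take $G'=H$ and $A'=A$, with $\alpha:H\hookrightarrow G$ the inclusion and $\beta=\mathrm{id}_A$. Both maps are continuous group homomorphisms by assumption. The compatibility condition $g'\beta(a)=\beta(\alpha(g')a)$ specializes to $ha=ha$ for $h\in H$ and $a\in A$, which holds tautologically since the $H$-module structure on $A$ is the restriction of the $G$-module structure. Proposition~\ref{proposition5.1} then produces a homomorphism $HS_c^*(G,A)\to HS_c^*(H,A)$, which unwinding the definition in the proof of Proposition~\ref{proposition5.1} is exactly restriction of continuous symmetric cochains from $G^n$ to $H^n$.

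For part~(2), I would apply Proposition~\ref{proposition5.1} with the roles ``source'' and ``target'' filled as follows: the group on the left is $G/H$ with module $A^H$, and the group on the right is $G$ with module $A$. Take $\alpha:G\to G/H$ to be the quotient map (continuous) and $\beta:A^H\hookrightarrow A$ to be the inclusion (continuous). Before invoking the proposition I must check that $A^H$ is genuinely a topological $G/H$-module: the formula $(gH)\cdot a := ga$ is well defined because $h\cdot a=a$ for $h\in H$, $a\in A^H$, and the resulting action is continuous since the restricted action $G\times A^H\to A^H$ factors through the quotient $G\times A^H\to (G/H)\times A^H$. Compatibility then reads $\beta(\alpha(g)\,a)=ga=g\beta(a)$, which is immediate.

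The main obstacle I foresee is the topological point in part~(2), namely checking that $A^H$ inherits the structure of a topological $G/H$-module; everything else is a routine bookkeeping exercise on top of Proposition~\ref{proposition5.1}. Once the action of $G/H$ on $A^H$ is verified to be continuous, both restriction and inflation are obtained without further work.
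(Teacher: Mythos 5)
Your proposal is correct and is exactly the paper's intended argument: the corollary is stated as an immediate consequence of Proposition~\ref{proposition5.1}, obtained by feeding in the compatible pairs $(H\hookrightarrow G,\ \mathrm{id}_A)$ and $(G\to G/H,\ A^H\hookrightarrow A)$, with the roles of primed and unprimed data matched as you describe. Your extra check that $A^H$ is a topological $G/H$-module (well-definedness and continuity of the induced action via the open quotient map) is a reasonable point the paper leaves implicit.
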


We also have, a long exact sequence in cohomology associated to a short exact sequence of topological $G$-modules admitting a symmetric continuous section which is compatible with the actions.

\begin{proposition}
Let $0 \to A' \stackrel{i}{\to} A \stackrel{j}{\to} A'' \to 0$ be a short exact sequence of topological $G$-modules admitting a symmetric continuous section which is compatible with the actions. Then there is a long exact sequence of symmetric continuous cohomology groups,
$$\cdots \to HS^n_{c}(G,A') \stackrel{i^n}{\to}  HS^n_{c}(G,A)\stackrel{j^n}{\to} HS^n_{c}(G,A'') \overset\delta{\to} HS^{n+1}_{c}(G,A')\to\cdots.$$
\end{proposition}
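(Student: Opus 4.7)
The plan is to construct a short exact sequence of cochain complexes
$$0 \to CS_{c}^*(G,A') \stackrel{i_*}{\to} CS_{c}^*(G,A) \stackrel{j_*}{\to} CS_{c}^*(G,A'') \to 0$$
and then invoke the standard zig--zag lemma of homological algebra to extract the long exact sequence in cohomology, with the connecting homomorphism $\delta$ defined in the usual way.

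At the cochain level, $i_*$ and $j_*$ are induced by post-composition. Since $i$ and $j$ are continuous and $G$-equivariant, post-composition preserves continuity, commutes with the coboundary $\partial^n$ of \eqref{eqn1}, and also preserves the $\Sigma_{n+1}$-action given by \eqref{eqn2}; hence $i_*$ and $j_*$ restrict to chain maps between symmetric cochain groups. Injectivity of $i_*$ is immediate from injectivity of $i$. For exactness at the middle term, given $\eta \in CS_{c}^n(G,A)$ with $j_*\eta = 0$, one has $\eta(g_1,\ldots,g_n) \in \Image(i)$ pointwise, and since $i$ is a topological embedding onto a closed subgroup, the assignment $\eta'(g_1,\ldots,g_n) := i^{-1}(\eta(g_1,\ldots,g_n))$ is continuous; symmetry of $\eta'$ follows from symmetry of $\eta$ by applying the $G$-equivariance and injectivity of $i$ to move $i^{-1}$ past each transposition formula in \eqref{eqn2}.

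The crucial step is the surjectivity of $j_*$ on symmetric cochains, which is precisely where the hypothesis is used. Let $t: A'' \to A$ be the given symmetric continuous section compatible with the actions, so $jt = \mathrm{id}_{A''}$, $t(-a) = -t(a)$, and $t(g \cdot a) = g \cdot t(a)$ for all $g \in G$ and $a \in A''$. For $\sigma \in CS_{c}^n(G,A'')$ set $\tilde\sigma := t \circ \sigma \in C_{c}^n(G,A)$. Then $j_*(\tilde\sigma) = \sigma$. To verify $\tilde\sigma$ is symmetric, observe that each formula in \eqref{eqn2} has the shape $(\tau_k \sigma)(g_1,\ldots,g_n) = -g_1 \cdot \sigma(\ldots)$ (for $k=1$) or $-\sigma(\ldots)$ (for $k>1$). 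Applying $t$ and using $t(-x) = -t(x)$ together with $t(g \cdot x) = g \cdot t(x)$ gives
$$(\tau_k \tilde\sigma)(g_1,\ldots,g_n) = t\big((\tau_k \sigma)(g_1,\ldots,g_n)\big) = t\big(\sigma(g_1,\ldots,g_n)\big) = \tilde\sigma(g_1,\ldots,g_n),$$
where the middle equality uses $\tau_k \sigma = \sigma$. Hence $\tilde\sigma \in CS_{c}^n(G,A)$ and $j_*$ is surjective.

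With the short exact sequence of cochain complexes established, the long exact sequence is a routine application of the zig--zag lemma. The main obstacle is the symmetry verification of $\tilde\sigma$; this reduces to a direct case-by-case check that the two hypotheses on $t$ exactly absorb the minus sign and the $G$-action appearing in each formula of \eqref{eqn2}, which is why both properties of $t$ (symmetry and compatibility with the action) are essential in the statement.
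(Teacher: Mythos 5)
Your proposal is correct and follows essentially the same route as the paper: reduce to a short exact sequence of symmetric continuous cochain complexes, with the only nontrivial point being surjectivity on symmetric cochains, which is exactly where the symmetric, action-compatible section $t$ is used to absorb the sign and the $G$-action in each formula of \eqref{eqn2}. The paper carries out the same verification by writing out the $\tau_1$, $\tau_i$, and $\tau_n$ cases explicitly, so there is no substantive difference.
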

\begin{proof}
We first show that, for each $n \geq 0$, there is a short exact sequence of symmetric continuous cochain groups
$$0 \to CS^n_{c}(G,A') \stackrel{i^n}{\to}  CS^n_{c}(G,A) \stackrel{j^n}{\to} CS^n_{c}(G,A'') \to 0.$$ Let $\sigma \in CS^n_{c}(G,A')$ be such that $i^n(\sigma)=0$, that is, $i(\sigma(g_1,...,g_n))=0$ for all $(g_1,...,g_n) \in G^n$. But injectivity of $i$ implies that $\sigma(g_1,...,g_n)=0$ for all $(g_1,...,g_n) \in G^n$. Hence $\sigma=0$ and $i^n$ is injective.

Since $ji=0$, we have $j^ni^n=0$ and hence $\Image(i^n) \subseteq \Ker(j^n)$. Suppose $\sigma \in \Ker(j^n)$, that is, $j(\sigma(g_1,...,g_n))=0$. This implies $\sigma(g_1,...,g_n) \in \Ker(j)=\Image(i)$. But $i: A' \to \Image(i)$ is a homeomorphism and hence has a continuous inverse $i^{-1}: \Image(i) \to A'$. Taking $\mu= i^{-1} \sigma$, we have $i^n(\mu)= \sigma$ and hence $\Ker(j^n)\subseteq \Image(i^n)$.

Next we show that $j^n$ is surjective. Let $\sigma \in CS^n_{c}(G,A'')$ and let $s: A'' \to A$ be a symmetric continuous section which is compatible with the actions. Taking $\mu= s \sigma$, we see that $\mu$ is continuous and $j^n(\mu)= j(s \sigma)= \sigma$. It remains to check that $\mu$ is symmetric. We have
\begin{eqnarray}
\tau_1\mu(g_1,g_2,g_3,...,g_n) & = & -g_1 \mu (g_1^{-1},g_1g_2,g_3,...,g_n)\nonumber\\
& = & -g_1 s \big( \sigma(g_1^{-1},g_1g_2,g_3,...,g_n) \big)\nonumber\\
& = & -s \big (g_1 \sigma(g_1^{-1},g_1g_2,g_3,...,g_n) \big)~\textrm{by compatibility of}~ s\nonumber\\
& = & -s \big (-(-g_1 \sigma(g_1^{-1},g_1g_2,g_3,...,g_n)) \big)\nonumber\\
& = & -s \big (-\sigma(g_1,g_2,g_3,...,g_n) \big)~\textrm{by}~ \tau_1\sigma=\sigma\nonumber\\
& = & s \big (\sigma(g_1,g_2,g_3,...,g_n) \big)~\textrm{by symmetry of}~ s\nonumber\\
& = & \mu(g_1,g_2,g_3,...,g_n).\nonumber
\end{eqnarray}
For $2 \leq i \leq n-1$, we have
\begin{eqnarray}
\tau_i\mu(g_1,g_2,g_3,...,g_n) & = & - \mu (g_1,...,g_{i-2},g_{i-1}g_i,g_i^{-1},g_ig_{i+1},g_{i+2},...,g_n)\nonumber\\
& = & -s \big( \sigma(g_1,...,g_{i-2},g_{i-1}g_i,g_i^{-1},g_ig_{i+1},g_{i+2},...,g_n) \big)\nonumber\\
& = & -s \big(-(- \sigma(g_1,...,g_{i-2},g_{i-1}g_i,g_i^{-1},g_ig_{i+1},g_{i+2},...,g_n)) \big)\nonumber\\
& = & -s \big(-\sigma(g_1,g_2,g_3,...,g_n) \big))~\textrm{by}~ \tau_i\sigma=\sigma\nonumber\\
& = & s \big(\sigma(g_1,g_2,g_3,...,g_n) \big)~\textrm{by symmetry of}~ s\nonumber\\
& = & \mu(g_1,g_2,g_3,...,g_n).\nonumber
\end{eqnarray}
Similarly, one can show that $$\tau_n\mu(g_1,g_2,g_3,...,g_n)=\mu(g_1,g_2,g_3,...,g_n).$$ Hence $j^n$ is surjective. Note that only the surjectivity of $j^n$ depends on the choice of the section.

As in the previous proposition, the maps $i^*$ and $j^*$ commute with the coboundary operators and hence we get the following short exact sequence of symmetric continuous cochain complexes $$0 \to CS^*_{c}(G,A') \stackrel{i^*}{\to}  CS^*_{c}(G,A) \stackrel{j^*}{\to} CS^*_{c}(G,A'') \to 0.$$

It is now routine to obtain the desired long exact sequence of symmetric continuous cohomology groups by a diagram chase. This completes the proof.
\end{proof}
\bigskip

\section{Symmetric continuous cohomology of profinite groups}\label{section6}
Profinite groups form a special class of topological groups. We refer the reader to \cite{Ribes} for basic definitions and results regarding profinite groups. The continuous cohomology of a profinite group with coefficients in a discrete module is well studied and equals the direct limit of the cohomology of finite groups. We prove the following similar result for symmetric continuous cohomology.

\begin{theorem}\label{theorem6.1}
The symmetric continuous cohomology of a profinite group with coefficients in a discrete module equals the direct limit of the symmetric cohomology of finite groups.
\end{theorem}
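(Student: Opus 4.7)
The plan is to prove the theorem at the cochain level, showing $CS_c^n(G,A)=\varinjlim_N CS^n(G/N,A^N)$ where $N$ ranges over the directed system of open normal subgroups of the profinite group $G$, and then pass to cohomology using exactness of filtered colimits of abelian groups.

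First, I would establish the analogous (well-known) statement for ordinary continuous cochains: $C_c^n(G,A)=\varinjlim_N C^n(G/N,A^N)$. The transition maps are the usual inflation maps $C^n(G/N,A^N)\to C^n(G/N',A^{N'})$ for $N'\subseteq N$, composing the projection $(G/N')^n\to (G/N)^n$ with the inclusion $A^N\hookrightarrow A^{N'}$. Surjectivity of the comparison map from the colimit to $C_c^n(G,A)$ uses compactness of $G^n$ together with discreteness of $A$: any continuous $\sigma\colon G^n\to A$ has finite image and is locally constant, so by the fundamental system of open neighborhoods of the identity in $G$ formed by open normal subgroups of finite index, there is an open normal subgroup $N_1$ such that $\sigma$ is constant on each coset of $N_1^n$. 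Since $A$ is a discrete $G$-module, each element in the finite image of $\sigma$ has open stabilizer; intersecting these stabilizers with $N_1$ yields an open normal $N\subseteq N_1$ for which $\sigma$ factors through $(G/N)^n$ with values in $A^N$. Injectivity is clear, as each inflation map is injective.

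Next I would verify that the $\Sigma_{n+1}$-action defined by \eqref{eqn2} is compatible with the inflation maps. This is immediate from the fact that the formulas involve only products, inverses, and the $G$-action on $A$, all of which descend through quotients $G\to G/N$ and restrict compatibly to $A^N\subseteq A$. In particular, a cochain $\sigma\in C^n(G/N,A^N)$ is fixed by $\Sigma_{n+1}$ if and only if its inflation to $C_c^n(G,A)$ is fixed. Since the functor of $\Sigma_{n+1}$-invariants commutes with filtered colimits of abelian groups (filtered colimits are exact and $\Sigma_{n+1}$ is finite, so invariants are a finite limit commuting with filtered colimits), we obtain the isomorphism
\[
CS_c^n(G,A)=C_c^n(G,A)^{\Sigma_{n+1}}=\varinjlim_N C^n(G/N,A^N)^{\Sigma_{n+1}}=\varinjlim_N CS^n(G/N,A^N).
\]

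Finally, the coboundary operators $\partial^n$ commute with inflation, so these isomorphisms assemble into an isomorphism of cochain complexes $CS_c^*(G,A)\cong\varinjlim_N CS^*(G/N,A^N)$. Exactness of filtered colimits of abelian groups allows us to interchange cohomology with the colimit, yielding $HS_c^n(G,A)\cong\varinjlim_N HS^n(G/N,A^N)$ for every $n\geq 0$. The main obstacle is the factorization argument in the first step; once continuous cochains are identified as inflations from finite quotients with values in the corresponding fixed submodules, the symmetric version and the passage to cohomology are formal consequences that follow by the same mechanism as in the classical (non-symmetric) case.
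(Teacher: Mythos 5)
Your proposal is correct and follows essentially the same route as the paper: both reduce the theorem to the cochain-level isomorphism $CS_c^n(G,A)\cong\varinjlim_U CS^n(G/U,A^U)$ (whose surjectivity rests on the same compactness/discreteness factorization argument from \cite[Lemma 6.5.4]{Ribes}) and then invoke exactness of the direct limit functor to pass to cohomology. The only, harmless, organizational difference is that you obtain the symmetric identification by first treating ordinary continuous cochains and then taking $\Sigma_{n+1}$-invariants (which, being a finite limit, commute with filtered colimits), whereas the paper builds the comparison maps directly on symmetric cochains via its Proposition \ref{proposition5.1}.
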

We now set notations for the proof of Theorem 6.1. Let $G$ be a profinite group and $A$ a discrete $G$-module. Let $\mathcal{U}$ be the set of all open normal subgroups of $G$. It can be proved that for each $U \in \mathcal{U}$, the quotient group $G/U$ is finite. Also, for each $U \in \mathcal{U}$, the group of invariants $$A^U=\{a \in A|~ua=a~\textrm{for all}~ u \in U \}$$ is a $G/U$-module by means of the action $$(gU, a) \mapsto ga~ \textrm{for}~ gU \in G/U~\textrm{and}~ a \in A.$$

For elements $U,V \in \mathcal{U}$, we say that $V \leq U$ if $U$ is a subgroup of $V$. This makes $\mathcal{U}$ a directed poset. For $V \leq U$, there are canonical homomorphisms $$\alpha_{UV}:G/U \to G/V~\textrm{and}~\beta_{VU}:A^V \to A^U$$ which form compatible pairs and gives rise to an inverse system of finite groups $\{G/U\}_{U \in \mathcal{U}}$ and a direct system of abelian groups $\{A^U\}_{U \in \mathcal{U}}$. It is then well known that $$G= \varprojlim G/U~\textrm{and}~ A=\varinjlim A^U.$$

Further, for each $n\geq 0$ and each $V \leq U$, as in Proposition \ref{proposition5.1}, the compatible pair of homomorphisms $(\alpha_{UV},\beta_{VU})$ induces a homomorphism $$\psi_{VU}^n:CS^n(G/V, A^V) \to CS^n(G/U, A^U).$$

Thus, we obtain in a natural way the following direct systems of abelian groups over $\mathcal{U}$: $$\{CS^n(G/U, A^U) \}_{U \in \mathcal{U}}~ \textrm{and}~\{HS^n(G/U, A^U) \}_{U \in \mathcal{U}}.$$ Note that, for each $n\geq 0$, the coboundary operator $$\partial_U^n:CS^n(G/U, A^U) \to C^{n+1}(G/U, A^U)$$ commutes with the bonding maps $\psi_{VU}^n$ and hence gives a coboundary operator $$\partial^n: \varinjlim CS^n(G/U, A^U) \to \varinjlim CS^{n+1}(G/U, A^U)$$ making $\{\varinjlim CS^n(G/U, A^U), \partial^n \}$ into a cochain complex.

To prove Theorem \ref{theorem6.1}, it suffices to prove the following lemma, which is essentially \cite[Lemma 6.5.4]{Ribes}.

\begin{lemma}
Let $G$ be a profinite group and $A$ a discrete $G$-module. Then for each $n \geq 0$, there is an isomorphism $$\varinjlim CS^n(G/U, A^U) \cong CS_c^n(G,A)$$ commuting with the corresponding coboundary operators.
\end{lemma}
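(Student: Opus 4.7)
The plan is to reduce to the classical (non-symmetric) isomorphism
$$\Theta: \varinjlim C^n(G/U, A^U) \stackrel{\cong}{\longrightarrow} C_c^n(G, A)$$
of \cite[Lemma 6.5.4]{Ribes}, whose stagewise formula sends $\bar\sigma: (G/U)^n \to A^U$ to $\iota_U \circ \bar\sigma \circ \pi_U^n$, where $\pi_U: G \to G/U$ is the quotient and $\iota_U: A^U \hookrightarrow A$ is the inclusion, and then to check that $\Theta$ restricts to an isomorphism of the symmetric subgroups.

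First, for each $U \in \mathcal{U}$ the pair $(\pi_U, \iota_U)$ is compatible in the sense of Proposition \ref{proposition5.1}, and so are the bonding pairs $(\alpha_{UV}, \beta_{VU})$. Consequently each stagewise map $\Theta_U$ carries $CS^n(G/U, A^U)$ into $CS_c^n(G, A)$, and the direct system restricts to one of symmetric cochain groups. Taking the colimit yields a well-defined homomorphism $\Theta_S: \varinjlim CS^n(G/U, A^U) \to CS_c^n(G, A)$, and commutation with the coboundary operators $\partial^n$ is inherited from the non-symmetric case.

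Injectivity of $\Theta_S$ is immediate, since filtered colimits of injections are injections: the natural map $\varinjlim CS^n(G/U, A^U) \hookrightarrow \varinjlim C^n(G/U, A^U)$ is injective, and composition with the isomorphism $\Theta$ expresses $\Theta_S$ as the restriction of an injective map through the inclusion $CS_c^n(G,A) \hookrightarrow C_c^n(G,A)$.

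The only substantive point is surjectivity. Given $\sigma \in CS_c^n(G, A)$, the classical $\Theta$ produces $U \in \mathcal{U}$ and $\bar\sigma \in C^n(G/U, A^U)$ with $\sigma = \iota_U \circ \bar\sigma \circ \pi_U^n$, and I would then show $\bar\sigma$ is itself symmetric. A direct check with the formulas \eqref{eqn2} and \eqref{eqn3}, using the $G$-equivariance of $\iota_U$ (relative to the action pulled back along $\pi_U$) together with the fact that $\pi_U$ is a group homomorphism, yields $\iota_U \circ (\tau_i \bar\sigma) \circ \pi_U^n = \tau_i \sigma$ for each adjacent transposition $\tau_i \in \Sigma_{n+1}$. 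Combined with $\tau_i \sigma = \sigma$, the surjectivity of $\pi_U^n$ and the injectivity of $\iota_U$ force $\tau_i \bar\sigma = \bar\sigma$, so $\bar\sigma \in CS^n(G/U, A^U)$ is a preimage of $\sigma$ under $\Theta_S$. The main, though quite mild, obstacle is precisely this descent of the symmetry condition through the quotient, and it rests entirely on the surjectivity of $\pi_U^n$.
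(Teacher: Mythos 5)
Your proposal is correct and follows essentially the same route as the paper: both construct the map from the compatible pairs $(\pi_U,\iota_U)$ via Proposition \ref{proposition5.1} and reduce to \cite[Lemma 6.5.4]{Ribes}, whose bijectivity argument the paper simply declares ``routine.'' Your explicit verification that the symmetry condition descends along the surjection $\pi_U^n$ (forcing $\tau_i\bar\sigma=\bar\sigma$ from $\tau_i\sigma=\sigma$) is exactly the detail the paper leaves implicit, and it is carried out correctly.
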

\begin{proof}
The proof is same as that of \cite[Lemma 6.5.4]{Ribes} and we outline it briefly for the convenience of the readers. Fix $n \geq0$. For each $U \in \mathcal{U}$, let $$\alpha_U: G \to G/U~\textrm{and}~\beta_U: A^U \to A$$ be the obvious homomorphisms. Note that $\beta_U(\alpha_U(g)a)=\beta_U(ga)=ga= g\beta_U(a)$ for all $g \in G$ and $a \in A^U$. Thus, the pair $(\alpha_U, \beta_U)$ is compatible. Let $$\psi_U^n:CS_c^n(G/U, A^U) \to CS_c^n(G,A)$$ be the homomorphism induced by the compatible pair $(\alpha_U, \beta_U)$ as in Proposition \ref{proposition5.1}. Note that this also commutes with the coboundary operators. Considering both $G/U$ and $A^U$ equipped with discrete topology, we have $CS_c^n(G/U, A^U)=CS^n(G/U, A^U)$. Therefore $$\psi_U^n:CS^n(G/U, A^U) \to CS_c^n(G,A).$$ For elements $U,V \in \mathcal{U}$ with $V \leq U$, by definitions, the following diagram commutes
$$
\xymatrix{
CS^n(G/V, A^V) \ar[d]^{\psi_{VU}^n} \ar[r]^{\psi_V^n} & CS_c^n(G,A)\\
CS^n(G/U, A^U) \ar[ru]^{\psi_U^n}.}
$$
Hence, there is a homomorphism $$\psi^n:\varinjlim CS^n(G/U, A^U) \to CS_c^n(G,A)$$ given by $$\psi^n([\sigma_U])= \psi_U^n(\sigma_U)~\textrm{for}~\sigma_U \in CS^n(G/U, A^U).$$ The proof of the bijectivity of $\psi^n$ is routine as in  \cite[Lemma 6.5.4]{Ribes}. The commutativity of the homomorphisms $\psi^n$ with the coboundary operators is immediate from the definitions and the formula \eqref{eqn1}. This proves the lemma.
\end{proof}

Note that $\varinjlim$ is an exact functor and hence we obtain
\begin{eqnarray}
HS_c^n(G,A) & = & H^n(CS_c^*(G,A))\nonumber\\
& = & H^n(\varinjlim CS^*(G/U, A^U)) \nonumber\\
& = & \varinjlim H^n(CS^*(G/U, A^U))\nonumber\\
& = & \varinjlim HS^n(G/U, A^U).\nonumber
\end{eqnarray}
This completes the proof of the Theorem \ref{theorem6.1}.
\bigskip

\section{Symmetric smooth cohomology of Lie groups}\label{section7}
The theory of Lie groups, particularly cohomology of Lie groups, has been studied from different points of view. Various cohomology theories of Lie groups have been constructed in the literature \cite{Hochschild1, Hochschild2, Hu, vanEst, Mostow}. There is a rich interplay between the continuous cohomology of a Lie group, the cohomology of its Lie algebra and the de Rham cohomology of its associated symmetric space \cite{Bott, Swierczkowski}.

There is a well known theory of smooth cohomology of a Lie group $G$ with coefficients in a topological vector space $V$ on which $G$ acts smoothly. This theory was defined by Blanc \cite{Blanc} and was later extended by Brylinski \cite{Brylinski} to coefficients in an arbitrary abelian Lie group. In this section, we define the symmetric smooth cohomology of a Lie group and prove some basic properties as we did for topological groups.

Let $G$ be a Lie group and $A$ be a smooth $G$-module. We can define an analogous cohomology theory by imposing the condition that the standard cochains are symmetric and smooth. More precisely, for each $n \geq 0$, let $C^n_{s}(G,A)$ be the group of all smooth maps from the product Lie group $G^n \to A$ and let the coboundary be given by the standard formula as in \eqref{eqn1}. Analogous to  the construction in the continuous case, for each $n\geq 0$, consider the action of the symmetric group $\Sigma_{n+1}$ on $C^n_{s}(G,A)$ as given by equations \eqref{eqn2}. The smoothness of the action of $G$ on $A$ implies that the action is well-defined. As in the continuous case, the action is compatible with the standard coboundary operators $\partial^n$ and hence gives the subcomplex of invariants $$\{CS_{s}^n(G, A), \partial^n\}_{n \geq 0} =\{C_{s}^n(G, A)^{\Sigma_{n+1}}, \partial^n\}_{n \geq 0}.$$ We define the symmetric smooth cohomology $HS^n_{s}(G,A)$ to be the cohomology groups of this new cochain complex.

We obtain some basic properties of this cohomology theory as follows.

\begin{proposition} Let $G$ be a Lie group and $A$ be a smooth $G$-module. Then we have the following:
\begin{enumerate}
\item $HS^0_{s}(G,A)=A^G.$
\item $ZS^1_{s}(G,A)=$ the group of symmetric smooth crossed homomorphisms from $G$ to $A$.
\item The map $h^*:HS_{s}^2(G, A) \to H_{s}^2(G, A)$ is injective.
\item Let $A$ be a $G$-module and $A'$ be a $G'$-module such that the actions are compatible. Then there is a homomorphism of cohomology groups $HS^n_{s}(G,A) \to HS^n_{s}(G', A')$ for each $n\geq 0$.
\item Let $0 \to A' \stackrel{i}{\to} A \stackrel{j}{\to} A'' \to 0$ be a short exact sequence of smooth $G$-modules admitting a symmetric smooth section which is compatible with the actions. Then there is a long exact sequence of symmetric smooth cohomology groups
$$\cdots \to HS^n_{s}(G,A') \stackrel{i^n}{\to} HS^n_{s}(G,A) \stackrel{j^n}{\to} HS^n_{s}(G,A'')\overset\delta{\rightarrow} HS^{n+1}_{s}(G,A')\to\cdots.$$
\end{enumerate}
\end{proposition}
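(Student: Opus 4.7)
The plan is that each of (1)--(5) is the smooth transcription of a result already established in the continuous case (Propositions \ref{proposition3.1}, \ref{proposition3.2}, \ref{proposition5.1}, and the long exact sequence proposition of Section \ref{section5}). My strategy is to verify that every construction appearing in those proofs preserves smoothness whenever its inputs are smooth. The background observation that makes this possible is that the coordinate changes in \eqref{eqn3}, the action map $G \times A \to A$, the group operations of $G$, and composition of maps between smooth manifolds all preserve smoothness; this is exactly why the $\Sigma_{n+1}$ action on $C^n_s(G,A)$ is well defined and commutes with $\partial^n$ in the first place.

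For (1), a $0$-cochain is an element of $A$, the group $\Sigma_1$ is trivial so the symmetry condition is vacuous, and the cocycle condition cuts out $A^G$. For (2), I would unwind the definitions exactly as in Proposition \ref{proposition3.1}(2): $\lambda \in ZS^1_s(G,A)$ if and only if $\lambda$ is smooth, satisfies the crossed-homomorphism identity $\lambda(gk) = g\lambda(k) + \lambda(g)$, and the symmetry relation $\lambda(g) = -g\lambda(g^{-1})$. For (3) I would reuse the argument of Proposition \ref{proposition3.2} verbatim: if $\sigma = \partial^1 \lambda$ is symmetric with $\lambda \in C^1_s(G,A)$, specializing the two symmetry relations \eqref{eqn4} at $g=1$ forces $\lambda(k) = -k\lambda(k^{-1})$, so $\lambda \in CS^1_s(G,A)$ and $[\sigma] = 0$. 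For (4) I would define $\psi^n(\sigma)(g'_1,\dots,g'_n) = \beta\big(\sigma(\alpha(g'_1),\dots,\alpha(g'_n))\big)$; this is smooth as a composite of smooth maps, and its symmetry, cocycle, and coboundary-commuting properties reduce to the formal identities carried out in Proposition \ref{proposition5.1} using only the compatibility of $(\alpha,\beta)$.

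For (5) the plan is to establish a short exact sequence of cochain complexes
\begin{equation*}
0 \longrightarrow CS^*_s(G,A') \stackrel{i^*}{\longrightarrow} CS^*_s(G,A) \stackrel{j^*}{\longrightarrow} CS^*_s(G,A'') \longrightarrow 0
\end{equation*}
and then extract the long exact sequence by the standard zig-zag lemma. Injectivity of $i^n$ is immediate. For $\Ker(j^n) \subseteq \Image(i^n)$ I would use that $i$ realizes $A'$ as a closed smooth submanifold of $A$, so that $i^{-1}: \Image(i) \to A'$ is smooth and any cochain taking values in $\Image(i)$ pulls back to a smooth symmetric cochain into $A'$. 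For surjectivity of $j^n$ I would apply the hypothesis: given a symmetric smooth $G$-equivariant section $s: A'' \to A$, the cochain $\mu = s \circ \sigma$ is smooth with $j^n(\mu) = \sigma$, and the same chain of equalities as in the continuous case shows $\mu$ is symmetric. This surjectivity step is where all three properties of $s$ (smoothness, symmetry, compatibility with actions) are used simultaneously, and is the only non-formal ingredient in the whole proposition; I expect it to be the main obstacle, while every other part reduces to recording that a composite of smooth maps is smooth.
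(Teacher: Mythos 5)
Your proposal is correct and takes exactly the route the paper intends: the paper's own proof of this proposition is simply the remark that the arguments are the same as in the continuous case (Propositions \ref{proposition3.1}, \ref{proposition3.2}, \ref{proposition5.1} and the long exact sequence of Section \ref{section5}), and your write-up supplies precisely that transcription, correctly flagging the only points where smoothness must be checked (the coordinate changes in \eqref{eqn3}, and the smoothness of $i^{-1}$ on $\Image(i)=\Ker(j)$, which is a closed subgroup and hence an embedded Lie subgroup). Nothing further is needed.
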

\begin{proof} We leave the proofs to the reader as they are similar to those of the continuous case.
\end{proof}

As in the continuous case, we would like to have an interpretation of the symmetric smooth cohomology in dimension two. For that purpose, we recall that, an extension of Lie groups $$0 \to A \stackrel{i}{\to} E \stackrel{\pi}{\to} G \to 1$$ is an algebraic short exact sequence of Lie groups with the additional property that both $i$ and $\pi$ are smooth  homomorphisms and $\pi$ admits a smooth local section $s:U \to E$, where $U \subset G$ is an open neighbourhood of identity. The existence of a smooth local section means that $E$ is a principal $A$-bundle over $G$ with respect to the left action of $A$ on $E$ given by $(a,e) \mapsto i(a)e$ for $a \in A$ and $e \in E$. Since, an extension of Lie groups is a principal bundle, it follows that it is a trivial bundle ($E$ is $A \times G$ as a smooth manifold) if and only if it admits a smooth section.

Two extensions of Lie groups $0 \to A \stackrel{i}{\to} E \stackrel{\pi}{\to} G \to 1$ and $0 \to A \stackrel{i'}{\to} E' \stackrel{\pi'}{\to} G \to 1 $ are said to be equivalent if there exists a smooth isomorphism $\phi:E \to E'$ with smooth inverse such that the following diagram commute
$$
\xymatrix{
0 \ar[r] & A \ar@{=}[d] \ar[r]^{i} & E \ar[r]^{\pi} \ar[d]^{\phi} & G \ar[r] \ar@{=}[d] & 1 \\
0 \ar[r] & A \ar[r]^{i'} & E' \ar[r]^{\pi'} & G \ar[r] & 1.}
$$

Let $\mathcal{S}(G,A)$ denote the set of equivalence classes of Lie group extensions of $G$ by $A$ admitting a symmetric smooth section and corresponding to the given way in which $G$ acts on $A$. Such extensions are classified by the second symmetric smooth cohomology as follows.

\begin{theorem}\label{theorem7.2}
Let $G$ be a Lie group and $A$ be a smooth $G$-module. Then there is a bijection $\Psi:\mathcal{S}(G,A) \to HS^2_{s}(G,A)$
\end{theorem}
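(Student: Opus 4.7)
The plan is to mirror the construction used in the proof of Theorem \ref{theorem3.3}, replacing continuity with smoothness throughout, and to take advantage of the fact that a Lie group extension admits a smooth global section precisely when it is smoothly a product. First, given an extension $0\to A\stackrel{i}{\to}E\stackrel{\pi}{\to}G\to 1$ in $\mathcal{S}(G,A)$ with symmetric smooth section $s:G\to E$ normalized by $s(1)=1$, I would define $\sigma:G\times G\to A$ by $\sigma(g,h)=i^{-1}(s(g)s(h)s(gh)^{-1})$. Smoothness of $\sigma$ follows from smoothness of $s$, $i$, the multiplication of $E$, and the fact that $i$ is a smooth embedding onto the closed subgroup $\ker\pi$, so that $i^{-1}$ is smooth on its image. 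The cocycle identity is the usual associativity computation, and the symmetry relations $\sigma(g,h)=-g\sigma(g^{-1},gh)=-\sigma(gh,h^{-1})$ follow verbatim from the matrix-style calculation in the proof of Theorem \ref{theorem3.3}, using only the symmetry of $s$ and the fact that $E$ acts on $i(A)$ by conjugation in the prescribed way. Independence from $s$ and from the chosen representative of the equivalence class is then shown exactly as in the topological case: two symmetric smooth sections $s,t$ differ by $\lambda(g)=i^{-1}(s(g)t(g)^{-1})$, which is a symmetric smooth $1$-cochain whose coboundary is $\sigma-\mu$. This defines $\Psi$.

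For surjectivity, given a symmetric smooth cocycle $\sigma\in ZS^2_s(G,A)$, I would form $E_\sigma=A\times G$ as a smooth manifold (product of smooth manifolds) and define the multiplication and inversion by the usual formulae
\[
(a,g)(b,h)=\bigl(a+gb+\sigma(g,h),\,gh\bigr),\qquad (a,g)^{-1}=\bigl(-g^{-1}a-g^{-1}\sigma(g,g^{-1}),\,g^{-1}\bigr).
\]
Smoothness of these operations follows from the smoothness of the $G$-action on $A$, of multiplication and inversion in $G$, and of $\sigma$; this promotes $E_\sigma$ to a Lie group. The canonical maps $i(a)=(a,1)$ and $\pi(a,g)=g$ are smooth homomorphisms, and the obvious section $s(g)=(0,g)$ is a smooth global section, in particular a smooth local section near the identity, so this is indeed an extension of Lie groups. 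Symmetry of $s$ and the fact that the resulting $2$-cocycle recovers $\sigma$ are identical calculations to those in Theorem \ref{theorem3.3}.

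For injectivity, suppose $\sigma,\sigma'$ are symmetric smooth cocycles associated to extensions $E,E'$ with symmetric smooth sections $s,s'$, and $\sigma'-\sigma=\partial^1\lambda$ for some $\lambda\in CS^1_s(G,A)$. Setting $t(g)=i(\lambda(g))s(g)$ gives another smooth section of $E$ whose associated cocycle is $\sigma'$, and the map $\phi_t:E_{\sigma'}\to E$, $\phi_t(a,g)=i(a)t(g)$, is a smooth homomorphism. Bijectivity of $\phi_t$ is immediate, and the inverse $i(a)t(g)\mapsto(a,g)$ is smooth because it is expressible via $\pi$ and the smooth map $e\mapsto i^{-1}(e\cdot t(\pi(e))^{-1})$; hence $\phi_t$ is an equivalence of Lie group extensions. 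Applying the same construction with $s'$ gives $\phi_{s'}:E_{\sigma'}\to E'$, and composing yields the required equivalence $E\cong E'$.

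The step I expect to require the most care is the verification that the constructed $E_\sigma$ is genuinely a Lie group extension in the sense of this section, that is, that $\pi$ admits a smooth local section. This is automatic here because $s(g)=(0,g)$ provides a smooth \emph{global} section, so $E_\sigma$ is the trivial principal $A$-bundle over $G$; the only genuine smoothness issue is ensuring that the inversion formula above is smooth, which reduces to smoothness of $g\mapsto\sigma(g,g^{-1})$ and of the $G$-action on $A$. Beyond this, the arguments are structurally identical to the topological case and I would suppress the repeated computations, citing Theorem \ref{theorem3.3}.
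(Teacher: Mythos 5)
Your proposal is correct and follows exactly the route the paper intends: the paper's own ``proof'' of Theorem \ref{theorem7.2} is a one-line deferral to the reader, citing the analogy with Theorem \ref{theorem3.3}, and your argument is precisely that analogue carried out with the appropriate smooth-category care (smoothness of $i^{-1}$ on $i(A)$ via the principal-bundle structure, the global smooth section $s(g)=(0,g)$ trivializing $E_\sigma$, and the explicit smooth inverse of $\phi_t$). No gaps; if anything, your justification of the smoothness of $\phi_t^{-1}$ is cleaner than the corresponding step in the paper's topological proof.
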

\begin{proof}
We leave the proof to the reader as it is similar to that of the continuous case.
\end{proof}

Note that countable groups with the discrete topology are 0-dimensional Lie groups. Taking $G$ to be an abelian Lie group of positive dimension in the Example \ref{example4.1}, we have an example of a positive dimensional Lie group and a smooth module for which the two cohomology theories $HS_s^*(-,-)$ and $H_s^*(-,-)$ are different. Similarly, Example \ref{example4.2} also serves as an example for the Lie group case.

Note that an extension of Lie groups $0 \to A \stackrel{i}{\to} E \stackrel{\pi}{\to} G \to 1$ can be thought of as an extension of topological groups by considering only the underlying topological group structure ($i$ becomes closed continuous and $\pi$ becomes open continuous). This gives the restriction homomorphism $$r^*:HS^n_{s}(G,A) \to HS^n_{c}(G,A)~ \textrm{for each}~ n \geq 0.$$ We investigate this homomorphism in dimension two. Before that, we recall Hilbert's fifth problem, which asked: Is every locally Euclidean topological group necessarily a Lie group? It is well known that Hilbert's fifth problem has a positive solution \cite{Gleason, Montgomery, Yamabe}. We use this in the following concluding theorem.

\begin{theorem}
Let $G$ be a Lie group and $A$ be a smooth $G$-module. Then the natural homomorphism $$r^*:HS^2_{s}(G,A)\to HS^2_{c}(G,A)$$ is an isomorphism.
\end{theorem}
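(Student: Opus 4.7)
The plan is to pass through the classification bijections $\Phi : \mathcal{C}(G, A) \to HS_c^2(G, A)$ of Theorem \ref{theorem3.3} and $\Psi : \mathcal{S}(G, A) \to HS_s^2(G, A)$ of Theorem \ref{theorem7.2}. Under these bijections, the map $r^*$ corresponds to the forgetful map $\mathcal{S}(G, A) \to \mathcal{C}(G, A)$ which sends a Lie extension of $G$ by $A$ equipped with a symmetric smooth section to its underlying topological extension, with the smooth section reinterpreted merely as a symmetric continuous section. Hence it suffices to show this forgetful map is a bijection.

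For injectivity, suppose $(E, i, \pi)$ and $(E', i', \pi')$ are Lie extensions whose classes in $\mathcal{S}(G, A)$ become equivalent in $\mathcal{C}(G, A)$ through an open continuous isomorphism $\phi : E \to E'$. Since $E$ and $E'$ are Lie groups, the classical theorem that every continuous homomorphism between Lie groups is automatically smooth applies to both $\phi$ and $\phi^{-1}$. Thus $\phi$ is a smooth isomorphism with smooth inverse, and the two extensions are equivalent in $\mathcal{S}(G, A)$ as well.

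For surjectivity, begin with a topological extension $0 \to A \xrightarrow{i} E \xrightarrow{\pi} G \to 1$ admitting a symmetric continuous section $s$. The map $(a, g) \mapsto i(a) s(g)$ is a homeomorphism $A \times G \to E$, so $E$ is locally Euclidean. By the resolution of Hilbert's fifth problem (Gleason, Montgomery--Zippin, Yamabe), $E$ admits a unique compatible Lie group structure. Automatic smoothness ensures that $i$ and $\pi$ become smooth homomorphisms, and since $\ker \pi = i(A)$ is a closed Lie subgroup, $\pi$ is a smooth surjective submersion and thus admits smooth local sections. So the extension is already a Lie extension in the sense required.

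The final step, which I expect to be the main obstacle, is to upgrade the symmetric continuous section $s$ to a symmetric smooth global section. Because $\pi : E \to G$ is a smooth principal $A$-bundle which is topologically trivial (via $s$), a standard partition-of-unity argument on the paracompact manifold $G$ yields some (not necessarily symmetric) smooth section $s_0$. Its smooth 2-cocycle $\sigma_0$ is cohomologous in $H_c^2(G, A)$ to the symmetric continuous 2-cocycle $\sigma$ attached to $s$. To promote $\sigma_0$ to a symmetric smooth representative, I would invoke the Hochschild--Mostow isomorphism $H_s^2(G, A) \cong H_c^2(G, A)$ for Lie groups: the symmetry defects $\tau_i \sigma_0 - \sigma_0$ are smooth 2-cocycles vanishing in $H_c^2(G, A)$ (as $\tau_i \sigma = \sigma$), hence smooth coboundaries, and solving the resulting system for a smooth 1-cochain $\mu$ gives $\sigma_0 + \partial^1 \mu$ symmetric. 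Then $g \mapsto i(\mu(g)) s_0(g)$ is a symmetric smooth section of the Lie extension, producing an element of $\mathcal{S}(G, A)$ mapping to the original class under the forgetful map; the delicate part is the simultaneous solvability of these cochain equations in the smooth category, and this is where the comparison theorem relating smooth and continuous cohomology of Lie groups is essential.
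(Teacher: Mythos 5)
Your overall strategy coincides with the paper's: injectivity rests on the automatic smoothness of continuous homomorphisms between Lie groups, and surjectivity on the solution of Hilbert's fifth problem applied to a total space homeomorphic to $A\times G$. The injectivity half is correct (the paper phrases it as triviality of $\Ker(r^*)$ --- a continuously split Lie extension is smoothly split --- while you prove injectivity of the forgetful map $\mathcal{S}(G,A)\to\mathcal{C}(G,A)$; both reduce to the same automatic-smoothness fact, and your reduction through the two classification bijections is legitimate since the relevant square commutes). The first portion of your surjectivity argument, endowing $E$ with the structure of a Lie group extension, also matches the paper.

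The gap is in your final symmetrization step, and it is the step that carries essentially all of the content. Knowing that each symmetry defect $\tau_i\sigma_0-\sigma_0$ is a smooth $2$-cocycle that is a coboundary does not by itself produce a single smooth $1$-cochain $\mu$ making $\sigma_0+\partial^1\mu$ simultaneously $\tau_1$- and $\tau_2$-invariant: having cohomologically trivial symmetry defects is in general strictly weaker than lying in the image of $h^*$ (the non-surjectivity of $h^*$ in Example \ref{example4.1} is a warning that symmetric representatives cannot be manufactured by formal cochain manipulations), so the ``simultaneous solvability'' you defer is precisely the assertion to be proved, and the Hochschild--Mostow comparison does not supply it. One can write down an explicit candidate, e.g.\ $\mu=-\tfrac{1}{2}(\tau_1\lambda-\lambda)$ where $\sigma_0-\sigma=\partial^1\lambda$, but this requires $A$ to be uniquely $2$-divisible, which is not assumed. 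The paper sidesteps the whole issue by never symmetrizing a smooth section: it works with the explicit model $E_\sigma=A\times G$ built from the given symmetric continuous cocycle, whose canonical section $s(g)=(0,g)$ is symmetric by construction, and asserts that this $s$ is smooth because $E_\sigma$ carries the product smooth structure. (Whether the Lie group structure furnished by Hilbert's fifth problem on $E_\sigma$ really induces the product smooth structure on $A\times G$ is itself the delicate point --- the paper dispatches it in one sentence --- but your detour through general cochain equations does not avoid that difficulty; it only relocates it.) To complete your argument you would need either to justify the solvability of the symmetrization equations in the smooth category for a general abelian Lie group $A$, or to follow the paper and argue directly that the specific symmetric section $s(g)=(0,g)$ of $E_\sigma$ is smooth.
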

\begin{proof}
Let $[\sigma] \in HS^2_{s}(G,A)$ and let $0 \to A \to E \to G \to 1$ be an extension of Lie groups corresponding to $[\sigma]$ by Theorem \ref{theorem7.2}, which is unique up to equivalence of extensions. Suppose that $r^*([\sigma])$ is trivial in $HS^2_{c}(G,A)$. Then there exists a continuous section $s: G \to E$ which is a group homomorphism. This gives a continuous isomorphism between the Lie groups $E$ and $A \rtimes G$. A continuous homomorphism between Lie groups is smooth \cite[Theorem 4.21]{Michor}. As a consequence this isomorphism is smooth. Hence the cohomology class $[\sigma]$ is trivial in $HS^2_{s}(G,A)$ and the homomorphism $r^*$ is injective.

Let $[\sigma] \in HS^2_{c}(G,A)$ and let $0 \to A \stackrel{i}{\to} E_{\sigma} \stackrel{\pi}{\to} G \to 1$ be the extension of topological groups defined using the 2-cocycle $\sigma$ as in the proof of Theorem \ref{theorem3.3}. The extension admits a symmetric continuous section $s:G \to E_{\sigma}$ given by $s(g)=(0,g)$ for all $g \in G$. By construction, $E_{\sigma}$ is $A \times G$ as a topological space. Also, $E_{\sigma}$ is locally Euclidean as both $A$ and $G$ are Lie groups. Hence, we conclude that $E_{\sigma}$ is a Lie group by the positive solution to Hilbert's fifth problem. Since a continuous homomorphism between Lie groups is smooth, we have that both $i$ and $\pi$ are smooth homomorphisms. Applying the implicit function theorem, we can find a smooth section of $\pi$ defined in a neighbourhood of identity in $G$. This shows that $0 \to A \to E_{\sigma} \to G \to 1$ is an extension of Lie groups. Further, the section $s:G \to E_{\sigma}$ becomes smooth as $E_{\sigma}$ has the product smooth structure. Let $\sigma'$ be the symmetric smooth 2-cocycle associated to the symmetric smooth section $s$. Then $r^*([\sigma'])=[\sigma]$ and the homomorphism $r^*$ is surjective.
\end{proof}

\bibliographystyle{amsplain}

\end{document}